\documentclass[a4paper]{amsart}

\usepackage{amsmath,amssymb,amsthm,amscd}
\usepackage[matrix,arrow,curve,tips]{xy}
            \SelectTips{cm}{}

\usepackage{mathrsfs}
\usepackage{stmaryrd}
\usepackage{mathabx}

\hyphenation{group-oid group-oids}

\newtheorem{dfn}{Definition}[section]

\newtheorem{prop}[dfn]{Proposition}
\newtheorem{theo}[dfn]{Theorem}
\newtheorem{ex}[dfn]{Example}

\newtheorem{rem}[dfn]{Remark}

\newcommand{\RR}{\mathbb{R}}
\newcommand{\CC}{\mathbb{C}}
\newcommand{\DD}{\mathbb{D}}

\newcommand{\NN}{\mathbb{N}}

\newcommand{\cE}{\mathcal{E}}

\newcommand{\Hom}{\text{Hom}}
\newcommand{\Lin}{\text{Lin}}

\newcommand{\src}{\mathord{\mathrm{s}}}
\newcommand{\trg}{\mathord{\mathrm{t}}}

\newcommand{\com}{\mathbin{{\scriptstyle \circ }}}
\newcommand{\ten}{\mathbin{\otimes}}

\newcommand{\id}{\mathord{\mathrm{id}}}

\newcommand{\supp}{\mathord{\mathrm{supp}}}
\newcommand{\uni}{\mathord{\mathrm{uni}}}
\newcommand{\inv}{\mathord{\mathrm{inv}}}
\newcommand{\mlt}{\mathord{\mathrm{mlt}}}
\newcommand{\Dirac}{\mathord{\mathrm{Dirac}}}
\newcommand{\Spec}{\mathord{\mathrm{Spec}}}
\newcommand{\Sp}{\mathord{\mathrm{Sp}}}

\newcommand{\cu}{\mathord{\epsilon}}
\newcommand{\cm}{\mathord{\Delta}}
\newcommand{\C}{\mathord{\mathcal{C}^{\infty}}}
\newcommand{\Cc}{\mathord{\mathcal{C}^{\infty}_{c}}}

\newcommand{\Esp}{\mathord{\mathcal{E}_{\mathit{sp}}}}

\newcommand{\Bsp}{\mathord{\mathcal{B}_{\mathit{sp}}}}
\newcommand{\Gsp}{\mathord{\mathcal{G}_{\mathit{sp}}}}
\newcommand{\AGsp}{\mathord{\mathcal{AG}_{\mathit{sp}}}}

\newcommand{\pisp}{\mathord{\pi_{\mathit{sp}}}}
\newcommand{\Lt}{\mathord{{\mathrm{L}}}}
\newcommand{\GG}{\mathscr{G}}

\newcommand{\Nd}{N^{\scriptscriptstyle\#}}
\newcommand{\Hd}{H^{\scriptscriptstyle\#}}
\newcommand{\pid}{\pi^{\scriptscriptstyle\#}}

\title[]
      {Locally convex bialgebroid of an action Lie groupoid}
      
\author{J. Kali\v{s}nik}
\address{Department of Mathematics, University of Ljubljana,
         Jadranska 19, 1000 Ljubljana, Slovenia;
         Institute of Mathematics, Physics and Mechanics,
         University of Ljubljana, Jadranska 19,
         1000 Ljubljana, Slovenia}
\email{jure.kalisnik@fmf.uni-lj.si}

\thanks{This research was supported by research grants J1-1690, N1-0137, N1-0237 and research program Analysis and Geometry P1-0291 from 
the Slovenian Research Agency ARRS}

\subjclass[2010]{16T05,16T10,22A22,46A04,46F05,46H25}
\keywords{action Lie groupoid, coalgebra, Dirac distribution, Hopf algebroid, transversal distribution}

\begin{document}

\begin{abstract}
Action Lie groupoids are used to model spaces of orbits of actions of Lie groups on manifolds. For each such action groupoid $M\rtimes H$ we
construct a locally convex bialgebroid $\Dirac(M\rtimes H)$ with an antipode over $\Cc(M)$, from which the groupoid $M\rtimes H$
can be reconstructed as its spectral action Lie groupoid $\AGsp(\Dirac(M\rtimes H))$.
\end{abstract}

\maketitle

\section{Introduction}

Our motivation for this paper originates from the Gelfand-Naimark theorem. To any locally compact, Hausdorff
topological space $X$ one assigns the $C^{*}$-algebra $C_{0}(X)$ of all continuous functions on $X$ that vanish at infinity.
The set $\Spec(C_{0}(X))$ of all characters on $C_{0}(X)$ can be equipped with the weak-$^{*}$ topology so that it
becomes a locally compact Hausdorff space. The Gelfand-Naimark theorem then says that the map $\Phi^{\text{lc}}_{X}:X\to\Spec(C_{0}(X))$,
which assigns to a point $x\in X$ the evaluation $\delta_{x}$ at $x$, is a homeomorphism. 

We wish to obtain a similar result for the class of geometric spaces which can be represented by Lie groupoids \cite{CSWe99,Mac05,MoMr03,MoMr05}.
These spaces include orbifolds, spaces of leaves of foliations and spaces of orbits of Lie groups actions. In \cite{Mrc07_1} a result in the
spirit of the Gelfand-Naimark theorem was constructed for the class of \'{e}tale Lie groupoids, which can be used to model orbifolds and
spaces of leaves of foliations. To any \'{e}tale Lie groupoid $\GG$ one assigns the Hopf algebroid $\Cc(\GG)$
of smooth compactly supported functions on $\GG$ (if $\GG$ is not Hausdorff, one needs to be careful with the definition). As
an algebra $\Cc(\GG)$ coincides with the Connes convolution algebra in the Hausdorff case \cite{Con82}, while the coalgebra
structure is basically induced from the sheaf \cite{Mrc07_2}, corresponding to the target map $\trg:\GG\to M$ of $\GG$. Finally, the antipode
on $\Cc(\GG)$ is induced by the inverse map of $\GG$. For each such Hopf algebroid $\Cc(\GG)$ one can construct
the spectral \'{e}tale Lie groupoid $\Gsp(\Cc(\GG))$ so that there is a natural isomorphism $\Phi^{\text{egr}}_{\GG}:\GG\to\Gsp(\Cc(\GG))$
of Lie groupoids. Similar ideas were used in \cite{KaMr13} to extend these results to the semi-direct products
of \'{e}tale Lie groupoids and bundles of Lie groups.

Structure maps of an \'{e}tale Lie groupoid $\GG$ are local diffeomorphisms so, in particular, the fibres of the target map are discrete.
It is therefore enough to reconstruct the fibres of such a groupoid just as sets, which can be done by utilizing the coalgebra
structure on $\Cc(\GG)$. However, in the case of a general groupoid one needs some additional information, which enables us
to recover the topology along the fibres. Let us explain the main idea on a simple example. If $\Gamma$ is a discrete
group, then $\Cc(\Gamma)$ is just the group Hopf algebra of $\Gamma$. Elements of $\Gamma$ correspond to grouplike elements of $\Cc(\Gamma)$,
while the multiplication and inverse of $\Gamma$ are encoded in multiplication and antipode of $\Cc(\Gamma)$. If we now replace
$\Gamma$ with a non-discrete Lie group $H$, one can still define its group Hopf algebra and reconstruct $H$, but only as a group. To recover
the topology and the smooth structure of $H$ we need some additional structure. One way to solve this problem is to identify the
group Hopf algebra of $H$ with the space $\Dirac(H)$ of distributions on $H$ which is spanned by Dirac distributions. The
space $\Dirac(H)$ is a subspace of the space $\cE'(H)$ of compactly supported distributions on $H$. If we equip $\Dirac(H)$
with the induced strong topology from $\cE'(H)$, the following two things happen. The group $H$ is naturally homeomorphic to the
space of Dirac distributions, which are precisely the grouplike elements of the Hopf algebra $\Dirac(H)$. On the other
hand, since $\Dirac(H)$ is dense in $\cE'(H)$, the strong dual $\Dirac(H)'$ is isomorphic to $\C(H)$. Now observe that
the space $\C(H)$ is a Fr\'{e}chet algebra, from which $H$ can be reconstructed as a manifold.

Judging by the above example, we are led to consider Hopf algebroids not only as purely algebraic objects, but with some
additional topological structure. The main idea consists of two parts. First of all we assign to a Lie groupoid $\GG$ 
a certain Hopf algebroid, from which the algebraic structure of $\GG$ can be reconstructed. We then equip this 
Hopf algebroid with a suitable locally convex structure, which enables us to recover the topology and smooth 
structure of $\GG$.

In this paper we use this idea on the class of action Lie groupoids, which are used to describe spaces of orbits of Lie groups actions
on manifolds. Each such action Lie groupoid $M\rtimes H$ is isomorphic as a groupoid to an \'{e}tale Lie groupoid
$M\rtimes\Hd$, where $\Hd$ is the group $H$ with discrete topology. We use this identification
to define the Dirac bialgebroid $\Dirac(M\rtimes H)$ of $M\rtimes H$ as a certain subspace of
the space $\cE'_{\trg}(M\rtimes H)$
of $\trg$-transversal distributions on $M\rtimes H$. Transversal distributions on Lie groupoids were studied
in \cite{AnMoYu21,AnSk11,ErYu19,KaMr22,LeMaVa17} and, crucially for our problem, it was shown in \cite{LeMaVa17} that the space
$\cE'_{\trg}(M\rtimes H)$ is a locally convex algebra, if we equip it with the strong topology
of uniform convergence on bounded subsets. With the induced topology $\Dirac(M\rtimes H)$ becomes a locally
convex bialgebroid with an antipode.

The paper is organized as follows. In Section \ref{Section Preliminaries} we recall the basic definitions and known results
that are used in the rest of the paper. In Section \ref{Dirac distributions of constant type} we construct for every
trivial bundle $\pi:M\times N\to M$ the space $\Dirac_{\pi}(M\times N)$ of transversal distributions of constant Dirac type. These
are families of Dirac distributions, supported on constant sections of $\pi$. If the fiber $N$ is discrete,
$\Dirac_{\pi}(M\times N)$ coincides with the $LF$-space $\Cc(M\times N)$, while in general
we show that it is a dense subspace of the space of $\pi$-transversal distributions $\cE'_{\pi}(M\times N)$. 
In Section \ref{Section Spectral bundle of coalgebra of Dirac distributions} we define on $\Dirac_{\pi}(M\times N)$
a structure of a locally convex coalgebra over $\Cc(M)$ and show that its strong $\Cc(M)$-dual is naturally
isomorphic to the Fr\'{e}chet algebra $\C(M\times N)$. The combination of the coalgebra structure and
locally convex topology enables us to reconstruct from $\Dirac_{\pi}(M\times N)$ the bundle $\pi:M\times N\to M$ as
the spectral bundle $\Bsp(\Dirac_{\pi}(M\times N))$. Finally, in Section \ref{Section Locally convex bialgebroid}
we use these results to assign to each action Lie groupoid $M\rtimes H$ its Dirac bialgebroid $\Dirac(M\rtimes H)$.
The space $\Dirac(M\rtimes H)$ is a locally convex bialgebroid with an antipode, which coincides with the locally
convex Hopf algebra $\Dirac(H)$ in the case when $M$ is a point. We then show that the groupoid $M\rtimes H$
can be reconstructed from $\Dirac(M\rtimes H)$ as its spectral action Lie groupoid $\AGsp(\Dirac(M\rtimes H))$

\section{Preliminaries}\label{Section Preliminaries}

In this subsection we will review basic definitions and results that will be needed in the rest of the paper.
More details concerning locally convex vector spaces and Lie groupoids can be found for example in \cite{Hor67,KrMi97,Tre67}
respectively \cite{Mac05,MoMr03,MoMr05}.

We will assume all our manifolds to be smooth, Hausdorff and paracompact, but not necessarily second-countable. For any
such manifold $M$ we will denote by $\C(M)$ the vector space of smooth $\CC$-valued functions on $M$. The subspaces
of compactly supported and $\RR$-valued functions on $M$ will be denoted by $\Cc(M)$ respectively $\C(M,\RR)$.

\subsection{Locally convex spaces}\label{Subsection Locally convex spaces}

All our locally convex vector spaces will be complex and Hausdorff. 
A subset $B$ of a locally convex space $E$ is bounded if and only if the set $p(B)$ is a bounded subset of $\RR$ for any continuous
seminorm $p$ on $E$. For locally convex vector spaces $E$ and $F$ we will denote by $\Hom(E,F)$ the space 
of all continuous linear maps from $E$ to $F$, equipped with the strong topology of uniform
convergence on bounded subsets. The basis of neighbourhoods of zero 
in $\Hom(E,F)$ consists of sets of the form
\[
K(B,V)=\{T\in\Hom(E,F)\,|\,T(B)\subset V\},
\]
where $B$ is a bounded subset of $E$ and $V$ is a neighbourhood of zero in $F$. 
If $E$ and $F$ are modules over an $\CC$-algebra $A$, we will denote by $\Hom_{A}(E,F)$ 
the corresponding space of continuous $A$-module homomorphisms and equip it with the induced topology from $\Hom(E,F)$. 

The space $\C(\RR^{l})$ has a structure of a Fr\'{e}chet algebra for any $l\in\NN$. Topology on $\C(\RR^{l})$
is generated by a family of seminorms $\{p_{L,m}\}$, indexed by compact subsets $L$ of $\RR^{l}$ and $m\in\NN_{0}$, given by
\[
p_{L,m}(F)=\sup_{\substack{x\in L,|\alpha|\leq m}}|D^{\alpha}(F)(x)|
\]
for $F\in\C(\RR^{l})$. Here we denoted $D^{\alpha}(F)=\frac{\partial^{|\alpha|}F}{\partial x_{1}^{\alpha_{1}}\cdots\partial x_{l}^{\alpha_{l}}}$, 
where $\alpha=(\alpha_{1},\ldots,\alpha_{l})\in\NN_{0}^{l}$ is a multi-index and $|\alpha|=\alpha_{1}+\alpha_{2}+\ldots+\alpha_{l}$. 
If $M$ is a second-countable manifold, one can choose similar seminorms with respect to some open cover of $M$ with
local coordinate charts to define the Fr\'{e}chet topology on $\C(M)$. This topology 
coincides with the topology of uniform convergence of all derivatives on compact subsets of $M$.
The strong dual of the space $\C(M)$ is the space $\cE'(M)=\Hom(\C(M),\CC)$ of compactly supported distributions on $M$.

If $M$ is not compact, the subspace $\Cc(M)$ of $\C(M)$ is not complete in the Fr\'{e}chet topology, 
so we consider a finer LF-topology on $\Cc(M)$. For any compact subset $L$ of $M$ we denote by $\Cc(L)$ the subspace of functions with
support contained in $L$. The space $\Cc(L)$ is a closed subspace of $\C(M)$ and hence a Fr\'{e}chet space itself.
The LF-topology on $\Cc(M)$ is now defined as the inductive limit topology with respect to the family of all subspaces
of the form $\Cc(L)$ for $L\subset M$ compact. The space 
$\Cc(M)$ with LF-topology is a complete locally convex space, which is not metrizable, if $M$ is not compact.

If $M$ is a smooth manifold and $E$ is a locally convex vector space,
a vector valued function $u:M\to E$ is smooth if in local coordinates all partial derivatives exist
and are continuous. 
We will denote by $\C(M,E)$ the space of smooth functions on $M$ with values in $E$ and by
$\Cc(M,E)$ its subspace, consisting of compactly supported functions. To make a distinction between scalar functions and vector valued functions, 
we will denote by $f(x)\in\CC$ the value of a function $f\in\C(M)$ at $x$ and by $u_{x}\in E$ the value of a function $u\in\C(M,E)$ at $x$.

\subsection{Lie groupoids}

A {\em Lie groupoid} is given by a manifold $M$ of objects and a manifold $\GG$ of arrows together with smooth structure maps:
target $\trg:\GG\to M$, source $\src:\GG\to M$, multiplication $\mlt:\GG\times^{\src,\trg}_{M}\GG\to\GG$, inverse $\inv:\GG\to\GG$
and unit $\uni:M\to\GG$. We assume that the source and the target maps are submersions to ensure that $\GG\times^{\src,\trg}_{M}\GG$
is a smooth manifold. A Lie groupoid is {\em \'{e}tale} if all its structure maps are local diffeomorphisms. Note that there exist
more general definitions of Lie groupoids, which we will not need.

\begin{ex}\rm
We will be mostly interested in action Lie groupoids. Suppose $H$ is a Lie group which acts
from the right on the manifold $M$. The associated action Lie groupoid $\GG=M\rtimes H$ is then a Lie groupoid over $M$ with
the manifold of arrows $M\times H$ and with the following structure maps:
\begin{align*}
\trg(x,h)&=x, \\
\src(x,h)&=xh, \\
\mlt((x,h),(xh,h'))&=(x,h)(xh,h')=(x,hh'), \\
\inv(x,h)&=(x,h)^{-1}=(xh,h^{-1}), \\
\uni(x)&=(x,e).
\end{align*}
Here $x\in M$ and $h,h'\in H$ are arbitrary, while $e$ is the unit of the Lie group $H$. The action groupoid $\GG$ is
\'{e}tale if and only if the group $H$ is discrete.
\end{ex}

\subsection{Real commutative algebras}

Let $A$ be an $\RR$-algebra. A real character on $A$ is 
a nontrivial multiplicative homomorphism from $A$ to $\RR$. We will denote by
\[
\Spec(A)
\]
the space of all real characters on $A$, equipped with the Gelfand topology (i.e. the relative weak-$^{*}$ topology). 
If the algebra $A$ satisfies the conditions of the Theorem in \cite{MiVa96}, the space $\Spec(A)$ also has
a natural smooth structure. 

If $Q$ is a smooth manifold, we have 
\[
\Spec(\C(Q))=\{\delta_{q}\,|\,q\in Q\},
\]
where $\delta_{q}$ is the Dirac functional, concentrated at the point $q$.
In this case we can equip the set $\Spec(\C(Q))$ with a topology and a smooth structure
such that the map $\Phi^{\text{man}}_{Q}:Q\to\Spec(\C(Q))$, defined by $\Phi^{\text{man}}_{Q}(q)=\delta_{q}$, is a diffeomorphism.

\subsection{Coalgebras}\label{Subsection Coalgebras}

Let $R$ be a commutative ring. We say that $R$ has local identities if for
any $r_{1},\ldots,r_{n}\in R$ there exists $r\in R$ such that $rr_{i}=r_{i}$ for $i=1,\ldots,n$. Similarly, a left $R$-module $C$
is locally unitary if for any $c_{1},\ldots,c_{n}\in C$ there exists $r\in R$ such that $rc_{i}=c_{i}$ for $i=1,\ldots,n$.

Suppose now that $R$ is an associative, commutative algebra with local identities over the field of complex numbers $\CC$ 
and let $C$ be a locally unitary left $R$-module. A coalgebra structure on $C$ over $R$ consists of two $R$-linear maps:
\begin{align*}
\Delta&:C\to C\otimes_{R}C, \\
\epsilon&:C\to R,
\end{align*}
called comultiplication and counit, which satisfy the conditions $(\id\ten\epsilon)\com\Delta=\id$ and 
$(\epsilon\ten\id)\com\Delta=\id$. Note that these conditions make sense because we can identify $C$ with $R\ten_{R}C$ and
$C\ten_{R}R$ since $C$ is locally unitary. A {\em coalgebra} over $R$ is a locally unitary left $R$-module $C$, equipped
with a coalgebra structure $(\Delta,\epsilon)$, which is coassociative in the sense that 
$(\id\ten\Delta)\com\Delta=(\Delta\ten\id)\com\Delta$. A coalgebra $C$ over $R$ is cocommutative if $\sigma\com\Delta=\Delta$,
where the flip isomorphism $\sigma:C\ten_{R}C\to C\ten_{R}C$ is given by $\sigma(c\ten c')=c'\ten c$.

Our main examples of coalgebras will be coalgebras associated to sheaves, which were introduced in \cite{Mrc07_2}.

\begin{ex}\rm\label{Example sheaf coalgebras}
Let $P$ and $M$ be manifolds and let
$\pi:P\to M$ be a local diffeomorphism (i.e. $P$ is a sheaf over $M$). The ring $\Cc(M)$ always has local units, but it is unital if and only 
if $M$ is compact. It will be convenient to denote for any $f\in\Cc(M)$ by $1_{f}\in\Cc(M)$ a function which
satisfies $1_{f}f=f$. If $\supp(f)\neq M$, the function $1_{f}$ is not uniquely defined.

The space $\Cc(P)$ has a natural structure of a locally unitary left $\Cc(M)$-module with the multiplication given by
\[
(f\cdot F)(p)=f(\pi(p))F(p)
\]
for $f\in\Cc(M)$, $F\in\Cc(P)$ and $p\in P$. The counit $\epsilon:\Cc(P)\to\Cc(M)$ is given by
\[
\epsilon(F)(x)=\sum_{p\in\pi^{-1}(x)}F(p)
\]
for any $F\in\Cc(P)$ and any $x\in M$. Note that the above sum is finite since $F$ has compact support and the fibers of $\pi$
are discrete.

To describe the comultiplication, let us recall that an open subset $W\subset P$ is $\pi$-elementary if $\pi|_{W}:W\to\pi(W)$ 
is a diffeomorphism. For any $f\in\Cc(\pi(W))$ we can consider the function $f\com\pi|_{W}\in\Cc(W)$ as an element of $\Cc(P)$,
if we extend it by zero outside of $W$. Since any $a\in\Cc(P)$ has compact support, it can be written (nonuniquely) as a finite sum
$a=\sum_{i=1}^{n}f_{i}\com\pi|_{W_{i}}$, where $W_{1},\ldots,W_{n}$ are $\pi$-elementary subsets of $P$ and $f_{i}\in\Cc(\pi(W_{i}))$
for $i=1,\ldots,n$. The comultiplication $\Delta:\Cc(P)\to\Cc(P)\ten_{\Cc(M)}\Cc(P)$ can be now defined by
\[
\Delta(\sum_{i=1}^{n}f_{i}\com\pi|_{W_{i}})=\sum_{i=1}^{n}(f_{i}\com\pi|_{W_{i}})\ten(1_{f_{i}}\com\pi|_{W_{i}}).
\]
One can show that the definition of $\Delta$ is independent of the various choices that we have made and that we obtain in this
way a cocommutative coalgebra $\Cc(P)$ over $\Cc(M)$.

For our purposes we will be mostly interested in trivial sheaves. If $\Gamma$ is a discrete topological space, then 
the projection $\pi:M\times\Gamma\to M$ is a local diffeomorphism. We can decompose the vector space $\Cc(M\times\Gamma)$ 
as a direct sum
\[
\Cc(M\times\Gamma)=\bigoplus_{y\in\Gamma}\Cc(M\times\{y\}).
\]
Using this decomposition we can write every element $a\in\Cc(M\times\Gamma)$ uniquely in the form
\[
a=\sum_{i=1}^{n}f_{i}\cdot\delta_{y_{i}}
\]
for some $f_{1},\ldots,f_{n}\in\Cc(M)$ and some $y_{1},\ldots,y_{n}\in\Gamma$. Here we have denoted 
for any $f\in\Cc(M)$ and any $y\in\Gamma$ by $f\cdot\delta_{y}\in\Cc(M\times\Gamma)$ the function, given by
\[
(f\cdot\delta_{y})(x,y')=
\left\{ \begin{array} {ll}
      f(x)   &;\hspace{1mm}y'=y,\\
      0      &;\hspace{1mm}y'\neq y
       \end{array}\right.
\]
for $(x,y')\in M\times\Gamma$. The comultiplication and counit are then given on the generators of 
$\Cc(M\times\Gamma)$ by the formulas:
\begin{align*}
\cm(f\cdot\delta_{y})&=(f\cdot\delta_{y})\ten(1_{f}\cdot\delta_{y}), \\
\cu(f\cdot\delta_{y})&=f.
\end{align*}
\end{ex}

In the rest of this subsection we will focus on coalgebras over the algebra $\Cc(M)$ for some manifold $M$ and recall the 
main results from \cite{Mrc07_2}. For any $x\in M$ we denote by
\[
I_{x}=\{f\in\Cc(M)\,|\,\text{$f|_{U}=0$ for some neighbourhood $U$ of $x$}\}
\]
the ideal of $\Cc(M)$, consisting of all functions with trivial germ at $x$. The quotient algebra of $\Cc(M)$
with respect to this ideal will be denoted by
\[
\Cc(M)_{x}=\Cc(M)/I_{x}.
\]
Now let $C$ be a coalgebra over $\Cc(M)$. The quotient
\[
C_{x}=C/I_{x}C
\]
then inherits a structure $(\Delta_{x},\epsilon_{x})$ of a coalgebra over $\Cc(M)_{x}$ 
which is called the local coalgebra of $C$ at $x$. The image of $c\in C$ in the quotient $C_{x}$ will be denoted by $c|_{x}\in C_{x}$.

An element $c\in C$ is weakly grouplike if $\Delta(c)=c\ten c'$ for some $c'\in C$. A weakly grouplike element $c\in C$ is 
normalized on an open subset $U\subset M$ if $\epsilon_{x}(c|_{x})=1$ for all $x\in U$. Weakly grouplike elements of the sheaf coalgebra 
$\Cc(P)$ are precisely elements of the form $f\com\pi|_{W}$ for some $\pi$-elementary
open subset $W$ of $P$ and some $f\in\Cc(\pi(W))$. Such an element $f\com\pi|_{W}$ is normalized on $U\subset\pi(W)$ if $f|_{U}\equiv 1$.

Since the local ring $\Cc(M)_{x}$ is unital for any $x\in M$,
we can also define the set of grouplike elements of $C_{x}$ by
\[
G(C_{x})=\{\xi\in C_{x}\,|\,\Delta_{x}(\xi)=\xi\ten\xi,\,\epsilon_{x}(\xi)=1\}.
\]
An element $\xi\in C_{x}$ is grouplike if and only if $\xi=c|_{x}$ for some weakly grouplike element $c\in C$, which is normalized on some
open neighbourhood of $x$. 

The spectral sheaf $\Esp(C)$ of a $\Cc(M)$-coalgebra $C$ is the sheaf 
\[
\pisp(C):\Esp(C)\to M
\] 
with the stalk
\[
\Esp(C)_{x}=G(C_{x}).
\]
The topology on $\Esp(C)$ is defined by the basis, consisting of $\pisp(C)$-elementary subsets of $\Esp(C)$ of the form
\[
c|_{U}=\{c|_{x}\in\Esp(C)\,|\,x\in U\}\subset\Esp(C),
\]
where $c\in C$ is a weakly grouplike element, normalized on an open subset $U$ of $M$.

Now let $\pi:P\to M$ be a sheaf over $M$. By Theorem $2.4$ in \cite{Mrc07_2} we have a natural isomorphism of sheaves
\[
\Phi^{\text{shv}}_{P}:P\to\Esp(\Cc(P))
\]
defined by
\[
\Phi^{\text{shv}}_{P}(p)=(f\com\pi|_{W})|_{\pi(p)},
\]
where $p\in P$, $W$ is a $\pi$-elementary neighbourhood of $p$ in $P$ and $f\in\Cc(\pi(W))$ is such that $f|_{\pi(p)}=1\in\Cc(M)_{\pi(p)}$. 
Moreover, by Theorem $2.10$ in \cite{Mrc07_2}, a coalgebra $C$ is isomorphic to some sheaf coalgebra $\Cc(P)$ if and only
if $C$ is locally grouplike, which by definition means that for every $x\in M$ the $\Cc(M)_{x}$-module $C_{x}$ is
free with the basis $G(C_{x})$.

\subsection{Bialgebroids and Hopf algebroids}

Bialgebroids and Hopf algebroids are generalizations of bialgebras and Hopf algebras over arbitrary rings.
In the literature \cite{Boh09,KaSa19,KoPo09,Lu96,Mal00,Swe74,Tak77,Xu98} one can find several similar definitions, which are in general inequivalent.
Our definition follows the one in \cite{Mrc07_1}.

Let $R$ be a commutative $\CC$-algebra with local units.
We say that a $\CC$-algebra $A$ extends $R$ if $
R$ is a subalgebra of $A$ and $A$ has local units in $R$. We do not assume that $R$ is a central subalgebra of $A$.
Any $\CC$-algebra $A$ which extends $R$, is naturally
an $R$-$R$-bimodule. We will denote by $A\ten_R A=A\ten_R^{ll} A$ the 
tensor product of left $R$-modules, which has two natural
right $R$-module structures. 

A {\em bialgebroid} over $R$ is a $\CC$-algebra $A$ which extends $R$,
together with structure maps $\Delta:A\to A\ten_{R}A$ and $\epsilon:A\to R$
for which $(A,\Delta,\epsilon)$ is a cocommutative coalgebra and such that:
\begin{enumerate}
\item [(i)]   $\cm(A)\subset A\overline{\ten}_{R}A$, where
              $A\overline{\ten}_{R}A$ is the algebra consisting of those
              elements of $A\ten_{R}A$, on which both right $R$-actions coincide,
\item [(ii)]  $\cu|_{R}=\id$ and $\cm|_{R}$ is the
              canonical embedding $R\subset A\ten_{R}A$,
\item [(iii)] $\cu(ab)=\cu(a\cu(b))$ and $\cm(ab)=\cm(a)\cm(b)$ for any $a,b\in A$.
\end{enumerate}
Antipode on a bialgebroid $A$ is a $\CC$-linear involution
$S:A\to A$ which satisfies the conditions:
\begin{enumerate}
\item [(i)] $S|_{R}=\id$ and $S(ab)=S(b)S(a)$ for any $a,b\in A$,
\item [(ii)]  $\mu_{A}\com(S\ten\id)\com\cm=\cu\com S$, where
$\mu_{A}$ denotes the multiplication in $A$.
\end{enumerate}
A {\em Hopf algebroid} over $R$ is a bialgebroid $A$ over $R$ with an antipode. Note that in the case 
when $R$ is a central subalgebra of $A$ the notions of bialgebroid and Hopf algebroid
coincide with the more familiar notions of bialgebra and Hopf algebra.

In the next example we will recall from \cite{Mrc07_1} the Hopf algebroid associated to an \'{e}tale Lie groupoid.

\begin{ex}\rm\label{Example Locally grouplike Hopf algebroid}
Let $\GG$ be an \'{e}tale Lie groupoid over $M$. Multiplication on $\GG$ induces
a convolution product \cite{Con82} on $\Cc(\GG)$, given by the formula
\[
(a_{1}*a_{2})(g)=\sum_{g=g_{1}g_{2}}a_{1}(g_{1})a_{2}(g_{2})
\]
for $a_{1},a_{2}\in\Cc(\GG)$. Note that this sum is always finite as $a_{1}$ and $a_{2}$
are compactly supported. Since $\trg:\GG\to M$ is a sheaf, we can also consider the space
$\Cc(\GG)$ as a locally grouplike coalgebra over $\Cc(M)$. Finally, the antipode 
$S:\Cc(\GG)\to\Cc(\GG)$ is defined by the formula
\[
S(a)=a\com\inv
\]
for $a\in\Cc(\GG)$. In this way $\Cc(\GG)$ becomes a Hopf algebroid over $\Cc(M)$.

Suppose now that $\Gamma$ is a discrete group which acts from the right on the manifold $M$
and denote by $M\rtimes\Gamma$ the associated action groupoid. We then have the decomposition
\[
\Cc(M\rtimes\Gamma)=\bigoplus_{g\in\Gamma}\Cc(M\times\{g\}).
\]
For any $g\in\Gamma$ and $f\in\Cc(M)$ let us denote by $gf\in\Cc(M)$ the function, given by $(gf)(x)=f(xg)$ for $x\in M$.
If we use the notation from the Example \ref{Example sheaf coalgebras},
the convolution product and the antipode on $\Cc(M\rtimes\Gamma)$ can be described on the set of generators by the formulas:
\begin{align*}
(f_{1}\cdot\delta_{g_{1}})*(f_{2}\cdot\delta_{g_{2}})&=(f_{1}(g_{1}f_{2}))\cdot\delta_{g_{1}g_{2}}, \\
S(f\cdot\delta_{g})&=(g^{-1}f)\cdot\delta_{g^{-1}}.
\end{align*}
\end{ex}

Now let $A$ be a Hopf algebroid over $\Cc(M)$. We will next recall from \cite{Mrc07_1} 
the construction of the spectral \'{e}tale Lie groupoid $\Gsp(A)$, 
associated to $A$. Note that $A$ is a coalgebra over $\Cc(M)$, so we have the notion of weakly grouplike elements. We say
that a weakly grouplike element $a\in A$ is $S$-invariant if there exists $a'\in A$ such that $\cm(a)=a\ten a'$ and
$\cm(S(a))=S(a')\ten S(a)$. In the case of the Hopf algebroid $\Cc(\GG)$ of an \'{e}tale Lie groupoid $\GG$, an element
$a\in\Cc(\GG)$ is $S$-invariant weakly grouplike if and only if it is of the form $f\com\trg|_{W}$ for some bisection $W$ of $\GG$
and some $f\in\Cc(\trg(W))$ (a bisection of an \'{e}tale Lie groupoid $\GG$ is an open subset $W$ of $\GG$ 
which is both $\trg$-elementary and $\src$-elementary).

An arrow of $A$ with target $y\in M$ is an element $g\in G(A_{y})$, for which there exists an
$S$-invariant weakly grouplike element $a\in A$ such that $g=a|_{y}$. The set of all arrows of $A$ with target $y$ will
be denoted by $\Gsp(A)_{y}$. All arrows of $A$ form a subsheaf $\Gsp(A)$ of $\Esp(A)$, whose projection will be denoted by
\[
\trg=\pisp(A)|_{\Gsp(A)}:\Gsp(A)\to M.
\]
To describe the source map of $\Gsp(A)$, we first recall that each $S$-invariant weakly grouplike element $a\in A$ induces 
a $\CC$-linear map $T_{a}:\Cc(M)\to\Cc(M)$, given by $T_{a}(f)=\cu(S(fa))$.
If $a$ is normalized on an open subset $U$ of $M$, one can find an open subset $U'$ of $M$ and a unique
diffeomorphism $\tau_{U,a}:U'\to U$ such that $T_{a}(\Cc(U))\subset\Cc(U')$ and $T_{a}(f)=f\com\tau_{U,a}$ 
for any $f\in\Cc(U)$. Furthermore, the element $S(a)$ is $S$-invariant weakly grouplike, normalized on $U'$ and we have
that $\tau_{U',S(a)}=\tau_{U,a}^{-1}$. The source map $\src:\Gsp(A)\to M$ is now defined by
\[
\src(a|_{y})=\tau_{U,a}^{-1}(y),
\]
where $a\in A$ is an $S$-invariant weakly grouplike element, normalized on $U$. Now choose elements $a,b\in A$ which
represent an arrow $a|_{y}\in\Gsp(A)_{y}$ and an arrow $b|_{x}\in\Gsp(A)_{x}$ such that $\src(a|_{y})=x$. The product
of $a|_{y}$ and $b|_{x}$ is then defined by
\[
a|_{y}b|_{x}=(ab)|_{y}.
\]
The unit $\uni(x)\in\Gsp(A)$ at the point $x\in M$ is given by
\[
\uni(x)=f|_{x},
\]
where $f\in\Cc(M)\subset A$ is any function with germ $f|_{x}=1\in\Cc(M)_{x}$. Finally, the inverse of an
arrow $a|_{y}\in\Gsp(A)_{y}$ with $\src(a|_{y})=x$ is defined by
\[
a|_{y}^{-1}=S(a)|_{x}.
\]
The groupoid $\Gsp(A)$ is an \'{e}tale Lie groupoid over $M$, called the spectral \'{e}tale Lie groupoid
of the Hopf algebroid $A$. For any \'{e}tale Lie groupoid $\GG$ over $M$
we have a natural isomorphism of Lie groupoids
\[
\Phi^{\text{egr}}_{\GG}:\GG\to\Gsp(\Cc(\GG)),
\]
defined by
\[
\Phi^{\text{egr}}_{\GG}(g)=(f\com\trg|_{W})|_{t(g)},
\]
where $W$ is any bisection of $\GG$ which contains $g$ and $f\in\Cc(\trg(W))$ is any function with $f|_{\trg(g)}=1\in\Cc(M)_{\trg(g)}$.

Hopf algebroid $A$ is isomorphic to a Hopf algebroid of the form $\Cc(\GG)$ for an \'{e}tale 
Lie groupoid $\GG$ if and only if it is locally grouplike, which means that for every $y\in M$ the
$\Cc(M)_{y}$-coalgebra $A_{y}$ is a free $\Cc(M)_{y}$-module with the basis consisting of arrows of $A$ at the point $y$.

\section{Transversal distributions of constant Dirac type}\label{Dirac distributions of constant type}

Let $\pi:M\times N\to M$ be a trivial bundle over $M$ with fiber $N$. In the spirit of the Gelfand-Naimark theorem
we will assign to it a locally convex $\Cc(M)$-module $\Dirac_{\pi}(M\times N)$ of distributions of constant Dirac type on $M\times N$
and show that its strong $\Cc(M)$-dual is isomorphic to the space $\C(M\times N)$. 
In general, the space $\Dirac_{\pi}(M\times N)$ is a dense subspace of the space
$\cE'_{\pi}(M\times N)$ of compactly supported transversal distributions. However, in the case when
$N=\Gamma$ is discrete, the space $\Dirac_{\pi}(M\times\Gamma)$ 
is complete and isomorphic to the LF-space $\Cc(M\times\Gamma)$. 

We start with the definition of transversal distributions on a trivial bundle.

\begin{dfn}
Let $M$ and $N$ be second-countable manifolds and let $M\times N$ be the trivial bundle over
$M$ with fibre $N$ and projection $\pi:M\times N\to M$.
The space of \textbf{$\pi$-transversal distributions with compact support} is the space 
\[
\cE'_{\pi}(M\times N)=\Hom_{\Cc(M)}(\C(M\times N),\Cc(M)).
\]
\end{dfn}

In other words, $\cE'_{\pi}(M\times N)$ is the space of continuous $\Cc(M)$-linear
maps from $\C(M\times N)$ to $\Cc(M)$, where the $\Cc(M)$-module structure on $\C(M\times N)$ is given by
\[
(f\cdot F)(x,y)=f(x)F(x,y)
\]
for $f\in\Cc(M)$, $F\in\C(M\times N)$ and $(x,y)\in M\times N$. The space $\cE'_{\pi}(M\times N)$ is a $\Cc(M)$-module as
well, with module structure given by
\[
(f\cdot T)(F)=T(f\cdot F)
\]
for $f\in\Cc(M)$, $F\in\C(M\times N)$ and $T\in\cE'_{\pi}(M\times N)$. If we equip the space $\cE'_{\pi}(M\times N)$
with the strong topology of uniform convergence on bounded subsets, it becomes a complete locally convex space.

\begin{rem}\rm

$(1)$ It was shown in \cite{LeMaVa17} that there is an isomorphism
\[
\cE'_{\pi}(M\times N)\cong\Cc(M,\cE'(N)),
\]
which enables us to identify a $\pi$-transversal distribution $T\in\cE'_{\pi}(M\times N)$ with a smooth, compactly supported
family $u=u(T)\in\Cc(M,\cE'(N))$. We will denote the value of $u$ at $x\in M$ by $u_{x}\in\cE'(N)$. If we denote by $\pi_{N}:M\times N\to N$
the projection to $N$, the distribution $u_{x}$ is given by the formula
\[
u_{x}(F)=T(F\com\pi_{N})(x)
\]
for any $F\in\C(N)$. We can also view any $u\in\Cc(M,\cE'(N))$ as a $\pi$-transversal distribution $T=T(u)$, if we define
\[
T(F)(x)=u_{x}(F\com\iota_{x})
\]
for $F\in\C(M\times N)$. Here $\iota_{x}:N\to\{x\}\times N$ is given by $\iota_{x}(y)=(x,y)$ for $x\in M$.
Different letters $T$ and $u$ are used intentionally to make a slight distinction between  
transversal distributions and smooth families of distributions along the fibres.

$(2)$ We can define a support of a family $u\in\Cc(M,\cE'(N))$ either as a subspace of $M$ or a subspace of $M\times N$. 
The support of $u$ is the subset $\supp(u)$ of $M$, defined by $\supp(u)=\overline{\{x\in M\,|\,u_{x}\neq 0\}}$. On the other hand,
the total support of $u$ is the subset $\supp_{M\times N}(u)$ of $M\times N$, consisting of all points 
$(x,y)\in M\times N$, which satisfy the condition that for every open neighbourhood $U$ of $(x,y)$ there exists $F\in\Cc(U)$ such that $u(F)\neq 0$.
For $u\in\Cc(M,\cE'(N))$ both supports are compact and we have $\pi(\supp_{M\times N}(u))=\supp(u)$. We can also define
the support of a $\pi$-transversal distribution $T\in\cE'_{\pi}(M\times N)$ as the subset $\supp(T)=\supp_{M\times N}(u(T))$ of $M\times N$.
For more details about supports we refer the reader to \cite{Kal22_2}.

$(3)$ If $\dim(M)>0$, the space $\Hom_{\Cc(M)}(\C(M\times N),\Cc(M))$ in fact coincides with the space $\Lin_{\Cc(M)}(\C(M\times N),\Cc(M))$ 
of $\Cc(M)$-linear maps, without any assumption on continuity (see \cite{Kal22}).
\end{rem}

Let us take a look at some important examples of transversal distributions that will be used throughout the paper.

\begin{ex}\label{Examples of transversal distributions} \rm

(1) Let $\pi:M\times N\to M$ be a trivial bundle and denote for any $y\in N$ by $E_{y}=M\times\{y\}$ the horizontal subspace of $M\times N$.
For any $f\in\Cc(M)$ we define a $\pi$-transversal distribution 
$\llbracket E_{y},f\rrbracket\in\cE'_{\pi}(M\times N)$ by
\[
\llbracket E_{y},f\rrbracket(F)(x)=f(x)F(x,y),
\]
for $F\in\C(M\times N)$. We think of $\llbracket E_{y},f\rrbracket$ as a smooth family of Dirac distributions, 
supported on the constant section $E_{y}$ and weighted by
the function $f$. In particular, we have
\[
\llbracket E_{y},f\rrbracket_{x}=f(x)\delta_{y}.
\] 

(2) Let $M=\RR^{l},\,N=\RR^{k}$ and let $\pi:\RR^{l}\times\RR^{k}\to\RR^{l}$ be the projection onto $\RR^{l}$. For  
$\phi\in\Cc(\RR^{l}\times\RR^{k})$ we define a $\pi$-transversal distribution $T_{\phi}\in\cE'_{\pi}(\RR^{l}\times\RR^{k})$ by
\[
T_{\phi}(F)(x)=\int_{\RR^{k}}\phi(x,y)F(x,y)\,dy
\]
for $F\in\C(\RR^{l}\times\RR^{k})$. The distribution $T_{\phi}$ corresponds to the family of smooth densities on $\RR^{k}$, parametrized by $\RR^{l}$
and explicitly given by
\[
(T_{\phi})_{x}=\phi(x,-)dV,
\]
where $dV$ is the Lebesgue measure on $\RR^{k}$. 

The map $\phi\mapsto T_{\phi}$ defines a continuous, injective $\Cc(\RR^{l})$-linear map 
\[
\Cc(\RR^{l}\times\RR^{k})\hookrightarrow\cE'_{\pi}(\RR^{l}\times\RR^{k}).
\]
Note that the $LF$-topology on $\Cc(\RR^{l}\times\RR^{k})$ is strictly finer than the subspace topology that is induced from  
$\cE'_{\pi}(\RR^{l}\times\RR^{k})$ via the above map.
In particular, if $M=\RR^{0}$ is a point, the above construction enables us to consider the space $\Cc(\RR^{k})$ as a subspace
of the space $\cE'(\RR^{k})$.
\end{ex}

Let us now denote for a manifold $N$ by $\Nd$ the set $N$ with the discrete topology.
The projection $\pid:M\times\Nd\to M$ is then a local diffeomorphism. Note that the manifold $M\times\Nd$ is paracompact, 
but not second-countable if $\dim(N)>0$. 

Using the notation from the Example \ref{Example sheaf coalgebras} we have a decomposition
\[
\Cc(M\times\Nd)=\bigoplus_{y\in\Nd}\Cc(M\times\{y\}),
\]
which enables us to write every element $a\in\Cc(M\times\Nd)$ uniquely in the form
\[
a=\sum_{i=1}^{n}f_{i}\cdot\delta_{y_{i}}
\]
for some $f_{1},\ldots,f_{n}\in\Cc(M)$ and some $y_{1},\ldots,y_{n}\in N$. 
Now define an injective $\Cc(M)$-linear map $\Psi_{M\times N}:\Cc(M\times\Nd)\to\cE'_{\pi}(M\times N)$ by
\[
\Psi_{M\times N}\big(\sum_{i=1}^{n}f_{i}\cdot\delta_{y_{i}}\big)=\sum_{i=1}^{n}\llbracket E_{y_{i}},f_{i}\rrbracket.
\]

\begin{dfn}
Let $M\times N$ be a trivial bundle with projection $\pi:M\times N\to M$. The space of \textbf{$\pi$-transversal distributions of
constant Dirac type} is the space
\[
\Dirac_{\pi}(M\times N)=\Psi_{M\times N}(\Cc(M\times\Nd))\subset\cE'_{\pi}(M\times N).
\]
The space $\Dirac_{\pi}(M\times N)$ is equipped with the induced topology from $\cE'_{\pi}(M\times N)$.

If $M$ is a point, we denote by $\Dirac(N)$ the subspace of $\cE'(N)$, spanned by Dirac distributions.
\end{dfn}

We will show in the sequel that $\Dirac_{\pi}(M\times N)$ is a proper, dense subspace of $\cE'_{\pi}(M\times N)$
if $\dim(N)>0$. However, in the case when $N=\Gamma$ is discrete, we have 
the following description of the space $\Dirac_{\pi}(M\times\Gamma)$.

\begin{prop}
Let $M\times\Gamma$ be a trivial bundle over a second-countable manifold $M$ with a 
countable discrete fiber $\Gamma$ and bundle projection $\pi:M\times\Gamma\to M$. 
\begin{enumerate}
\item[(a)] The map $\Psi_{M\times\Gamma}:\Cc(M\times\Gamma)\to\cE'_{\pi}(M\times\Gamma)$ is an isomorphism of $\Cc(M)$-modules, 
so we have $\Dirac_{\pi}(M\times\Gamma)=\cE'_{\pi}(M\times\Gamma)$. 
\item[(b)] The map $\Psi_{M\times\Gamma}$ is an isomorphism of locally convex spaces with respect
to the $LF$-topology on $\Cc(M\times\Gamma)$ and the strong topology on $\cE'_{\pi}(M\times\Gamma)$.
\end{enumerate}
\end{prop}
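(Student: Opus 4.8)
For part (a), note that $\Psi_{M\times\Gamma}$ is injective by construction, so only surjectivity is needed. The plan is to use the identification $\cE'_{\pi}(M\times\Gamma)\cong\Cc(M,\cE'(\Gamma))$ from \cite{LeMaVa17}, under which a transversal distribution $T$ becomes a smooth compactly supported family $u\in\Cc(M,\cE'(\Gamma))$. Since $\Gamma$ is discrete we have $\C(\Gamma)=\CC^{\Gamma}$, so its strong dual is the locally convex direct sum $\cE'(\Gamma)=\bigoplus_{y\in\Gamma}\CC\delta_{y}$. The image $u(M)$ is compact (continuous image of the compact support), and every bounded subset of a countable locally convex direct sum is contained in a finite sub-sum $\bigoplus_{y\in Y_{0}}\CC\delta_{y}$ (Dieudonn\'{e}--Schwartz). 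Hence $u_{x}=\sum_{y\in Y_{0}}f_{y}(x)\delta_{y}$ for finitely many $y\in Y_{0}$, and smoothness together with compact support of $u$ forces $f_{y}\in\Cc(M)$. Using $\llbracket E_{y},f\rrbracket_{x}=f(x)\delta_{y}$ from Example~\ref{Examples of transversal distributions}, this says exactly that $T=\sum_{y\in Y_{0}}\llbracket E_{y},f_{y}\rrbracket=\Psi_{M\times\Gamma}(\sum_{y}f_{y}\cdot\delta_{y})$, so $\Psi_{M\times\Gamma}$ is onto and therefore a $\Cc(M)$-module isomorphism, whence $\Dirac_{\pi}(M\times\Gamma)=\cE'_{\pi}(M\times\Gamma)$.

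For part (b) the easy direction is continuity of $\Psi_{M\times\Gamma}$. Writing $\Cc(M\times\Gamma)=\bigoplus_{y\in\Gamma}\Cc(M)$ as a locally convex direct sum (its $LF$-topology coincides with this one), by the universal property it suffices to check that each map $f\mapsto\llbracket E_{y},f\rrbracket$ is continuous into the strong topology; this follows from the hypocontinuity of the module multiplication $\Cc(M)\times\C(M)\to\Cc(M)$, since for a bounded $B\subset\C(M\times\Gamma)$ the slices $\{F(\cdot,y)\mid F\in B\}$ are bounded in $\C(M)$. To organise the converse I would filter both sides by finite subsets $Y\subset\Gamma$: on the domain $\Cc(M\times\Gamma)=\varinjlim_{Y}\Cc(M\times Y)$, while by part (a) $\cE'_{\pi}(M\times\Gamma)=\bigcup_{Y}\cE'_{\pi}(M\times Y)$. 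On each finite step $\Psi_{M\times\Gamma}$ restricts to the isomorphism $\Cc(M\times Y)\cong\cE'_{\pi}(M\times Y)\cong\Cc(M)^{Y}$, which reduces to the one-point-fibre case $\cE'_{\pi}(M\times\{y\})\cong\Cc(M)$ supplied topologically by the isomorphism of \cite{LeMaVa17}. Thus the whole statement comes down to showing that the strong topology on $\cE'_{\pi}(M\times\Gamma)$ is the inductive-limit (equivalently the direct-sum) topology of these steps.

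The main obstacle is precisely this last identification, i.e. that $\Psi_{M\times\Gamma}$ is open; naive componentwise continuity of $T\mapsto(T(e_{y}))_{y}$ (where $e_{y}\in\C(M\times\Gamma)$ is the indicator of $M\times\{y\}$) only gives continuity into the \emph{product} $\prod_{y}\Cc(M)$, whose subspace topology is strictly coarser than the direct sum. The key lemma I would prove is that every strongly bounded set $\mathcal{B}\subset\cE'_{\pi}(M\times\Gamma)$ is supported on finitely many components: strongly bounded families of transversal distributions have a common compact total support (the analogue of equicontinuity, by barrelledness of $\C(M\times\Gamma)$ together with \cite{Kal22_2}), and a compact subset of $M\times\Gamma$ meets only finitely many of the clopen components $M\times\{y\}$. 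This lemma shows that the strong bounded sets coincide with the direct-sum bounded sets, so $\cE'_{\pi}(M\times\Gamma)$ is the regular (indeed strict, via the complementing projections $T\mapsto[F\mapsto T(\sum_{y\in Y}F(\cdot,y)\,e_{y})]$) inductive limit of the ultrabornological steps $\cE'_{\pi}(M\times Y)$. The delicate point, which I regard as the heart of the argument, is upgrading this bounded-set description to the topological one: one must show that the strong topology is already bornological (equivalently ultrabornological), not merely that it carries the correct bounded sets. Once this is in hand, $\Psi_{M\times\Gamma}$ is a continuous linear bijection from the $LF$-space (hence webbed) $\Cc(M\times\Gamma)$ onto the ultrabornological space $\cE'_{\pi}(M\times\Gamma)$, so De Wilde's open mapping theorem makes it a topological isomorphism and completes (b). For the bornologicity step I would either invoke the structural properties of these transversal-distribution spaces from \cite{LeMaVa17,Kal22,Kal22_2} or argue directly that a seminorm bounded on the steps, being continuous on each $\Cc(M)^{Y}$, is continuous for the (strict $LF$) inductive-limit topology that the lemma exhibits.
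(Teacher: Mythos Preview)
Your proof of (a) and of the forward direction in (b) are essentially the paper's: both use $\cE'_{\pi}(M\times\Gamma)\cong\Cc(M,\cE'(\Gamma))$, note that $\cE'(\Gamma)=\bigoplus_{y}\CC\delta_{y}$ so the compact image $u(M)$ sits in a finite sub-sum, and read off the coefficients (the paper makes $f_{y}\in\Cc(M)$ explicit by evaluating $u$ at the indicator $1_{y}\in\C(M\times\Gamma)$).

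For the continuity of $\Psi_{M\times\Gamma}^{-1}$ the paper is much more direct and bypasses the bornologicity/De Wilde machinery altogether. It argues with nets: if $u_{\alpha}\to 0$ in $\cE'_{\pi}(M\times\Gamma)$ then $\{u_{\alpha}\}$ is bounded, so by exactly the ``key lemma'' you isolate the total supports lie in a common compact set, hence in some $L\times\{y_{1},\ldots,y_{n}\}$ with $L\subset M$ compact. Evaluating at the indicators $1_{y_{i}}\in\C(M\times\Gamma)$ then gives $f_{\alpha,i}=u_{\alpha}(1_{y_{i}})\to 0$ in $\Cc(M)$, whence $\Psi_{M\times\Gamma}^{-1}(u_{\alpha})=\sum_{i}f_{\alpha,i}\cdot\delta_{y_{i}}\to 0$ in $\Cc(L\times\{y_{1},\ldots,y_{n}\})\subset\Cc(M\times\Gamma)$. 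This is precisely the obstacle you flagged (``componentwise evaluation only lands in the product''): the paper dissolves it by first securing a \emph{common} finite index set, on which product and direct-sum topologies agree. Your structural route via open mapping is sound in principle, but as you yourself acknowledge it leaves the bornologicity of the strong topology on $\cE'_{\pi}(M\times\Gamma)$ unresolved; the paper's hands-on argument avoids that issue entirely and is the one to use. (One may note that ``a convergent net is bounded'' is not literally true for arbitrary nets, so the paper's argument is cleanest read with sequences or after passing to a tail; this small wrinkle is where your more careful instincts and the paper's informality meet, but the core idea---fix a common finite support, then evaluate at indicators---is both correct and much shorter than the De Wilde detour.)
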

\begin{proof} 
(a) First recall that we have an isomorphism $\cE'_{\pi}(M\times\Gamma)\cong\Cc(M,\cE'(\Gamma))$ of $\Cc(M)$-modules.
It is therefore enough to show that every $u\in\Cc(M,\cE'(\Gamma))$ is of the form $u=\Psi_{M\times\Gamma}(a)$ for some 
$a\in\Cc(M\times\Gamma)$.

Since $\Gamma$ is discrete, the space $\cE'(\Gamma)$ is isomorphic to the locally convex direct sum 
$\bigoplus_{y\in\Gamma}\text{Span}(\delta_{y})$ of one-dimensional subspaces, spanned by Dirac distributions. 
Any $u\in\Cc(M,\cE'(\Gamma))$ has compact support, so its image $u(M)\subset\cE'(\Gamma)$ is compact and hence bounded.
This implies that $u(M)$ lies in some finite-dimensional subspace $\bigoplus_{i=1}^{n}\text{Span}(\delta_{y_{i}})\subset\cE'(\Gamma)$
for some $y_{1},\ldots,y_{n}\in\Gamma$. We can therefore find functions $f_{1},\ldots,f_{n}:M\to\CC$ such that
\[
u_{x}=\sum_{i=1}^{n}f_{i}(x)\delta_{y_{i}}
\]
for every $x\in M$. If we denote by $1_{y_{i}}\in\C(M\times\Gamma)$ the function, which is equal to $1$ on $M\times\{y_{i}\}$ and zero elsewhere,
we have $f_{i}=u(1_{y_{i}})\in\Cc(M)$ and therefore
\[
u=\Psi_{M\times\Gamma}\big(\sum_{i=1}^{n}f_{i}\cdot\delta_{y_{i}}\big).
\]

(b) To see that $\Psi_{M\times\Gamma}$ is continuous, first choose a basic neighbourhood $K(B,V)$ of zero in $\cE'_{\pi}(M\times\Gamma)$, where $B$
is a bounded subset of $\C(M\times\Gamma)$ and $V$ is a neighbourhood of zero in $\Cc(M)$. From the definition of $LF$-topology on $\Cc(M\times\Gamma)$ 
it follows that we only have to show that the restrictions of $\Psi_{M\times\Gamma}$ onto subspaces of the form $\Cc(L\times\{y\})$ are continuous 
for all $y\in\Gamma$ and all compact subsets $L$ of $M$. Define a multiplication map $\mu:\Cc(L\times\{y\})\times\C(M\times\Gamma)\to\Cc(M)$ by
\[
\mu(f\cdot\delta_{y},F)(x)=f(x)F(x,y)
\]
for $f\cdot\delta_{y}\in\Cc(L\times\{y\})$ and $F\in\C(M\times\Gamma)$. Note that $\mu$ is continuous, 
so we can find neighbourhoods $V_{1}$ and $V_{2}$ of zero in
$\Cc(L\times\{y\})$ respectively $\C(M\times\Gamma)$ such that $\mu(V_{1},V_{2})\subset V$. Since $B\subset\C(M\times\Gamma)$ is bounded, we can 
assume that $B\subset V_{2}$ (if necessary, rescale $V_{1}$ and $V_{2}$ by appropriate inverse factors). Now observe that
\[
\mu(f\cdot\delta_{y},F)=\Psi_{M\times\Gamma}(f\cdot\delta_{y})(F).
\]
For $f\cdot\delta_{y}\in V_{1}$ and $F\in B$ we thus have $\Psi_{M\times\Gamma}(f\cdot\delta_{y})(F)=\mu(f\cdot\delta_{y},F)\in V$, 
which shows that $\Psi_{M\times\Gamma}(V_{1})\subset K(B,V)$.

Finally, we have to show that the map $\Psi_{M\times\Gamma}^{-1}$ is continuous. Let us choose a net $(u_{\alpha})_{\alpha\in A}$ that converges to zero in
$\cE'_{\pi}(M\times\Gamma)$. The set $\{u_{\alpha}\,|\,\alpha\in A\}$ is then a bounded subset of $\cE'_{\pi}(M\times\Gamma)$, so there
exists a compact subset of $M\times\Gamma$ which contains all supports $\supp(u_{\alpha})$ for $\alpha\in A$. In particular, we can
find a compact subset $L$ of $M$ and elements $y_{1},y_{2},\ldots,y_{n}\in\Gamma$, such that for every $\alpha\in A$ we can write
\[
u_{\alpha}=\sum_{i=1}^{n}\llbracket E_{y_{i}},f_{\alpha,i}\rrbracket
\]
for some $f_{\alpha,1},\ldots,f_{\alpha,n}\in\Cc(L)$. 
If we evaluate $u_{\alpha}$ at $1_{y_{i}}$ (see part (a) of the proof for the definition of $1_{y_{i}}$), we get that
$u_{\alpha}(1_{y_{i}})=f_{\alpha,i}$ converges to zero in $\Cc(M)$ for $1\leq i\leq n$, which implies that
$\Psi_{M\times\Gamma}^{-1}(u_{\alpha})=\sum_{i=1}^{n}f_{\alpha,i}\cdot\delta_{y_{i}}$ converges to zero in $\Cc(M\times\Gamma)$.
\end{proof}

We will now move on to the study of the space $\Dirac_{\pi}(M\times N)$ in the case of non-discrete fibre and show that it is a dense subspace of
the space $\cE'_{\pi}(M\times N)$. 

To do that, we first recall some facts about the convolution of distributions on euclidean spaces.
We will use the definition of the convolution product on $\cE'(\RR^{k})$ that is easily generalized to arbitrary Lie groupoids 
(see Section \ref{Section Locally convex bialgebroid}). 
For any $F\in\C(\RR^{k})$ we can define a smooth map $\RR^{k}\to\C(\RR^{k})$ by $y\to F\circ L_{y}$,
where the left translation $L_{y}:\RR^{k}\to\RR^{k}$ is defined by $L_{y}(y')=y+y'$ for $y\in\RR^{k}$. If we compose this map
with an arbitrary distribution $w\in\cE'(\RR^{k})$, we thus get a smooth map $\RR^{k}\to\CC$, given by $y\to w(F\circ L_{y})$.
The convolution product $*:\cE'(\RR^{k})\times\cE'(\RR^{k})\to\cE'(\RR^{k})$ can be then described by the formula
\[
(v*w)(F)=v(y\to w(F\circ L_{y}))
\]
for $F\in\C(\RR^{k})$ and $v,w\in\cE'(\RR^{k})$. The convolution product is a bilinear, jointly continuous map and it turns
$\cE'(\RR^{k})$ into a commutative, locally convex algebra.

As we have seen in the Example \ref{Examples of transversal distributions}, we can consider $\Cc(\RR^{k})$
as a subspace of $\cE'(\RR^{k})$. We can explicitly
describe the convolution of an arbitrary distribution with a smooth function as follows. Choose any $\rho\in\Cc(\RR^{k})$
and consider it as an element $T_{\rho}\in\cE'(\RR^{k})$, which we will for simplicity denote just by $\rho$. Let
$\tilde{\rho}\in\C(\RR^{k})$ be defined by $\tilde{\rho}(z)=\rho(-z)$. For any $v\in\cE'(\RR^{k})$ we then
have that $v*\rho\in\Cc(\RR^{k})\subset\cE'(\RR^{k})$ is a smooth function, given by
\[
(v*\rho)(y)=v(\tilde{\rho}\com L_{-y})
\]
for $y\in\RR^{k}$. This shows that $\Cc(\RR^{k})$ is actually an ideal of $\cE'(\RR^{k})$.

We will now use these results in the setting of transversal distributions. For any $u=(u_{x})_{x\in\RR^{l}}\in\Cc(\RR^{l},\cE'(\RR^{k}))$
and any $\rho\in\Cc(\RR^{k})$ we define $u*\rho\in\Cc(\RR^{l},\cE'(\RR^{k}))$ pointwise by
\[
(u*\rho)_{x}=u_{x}*\rho.
\]
Smoothness of $u*\rho$ follows from bilinearity and continuity of the convolution product. One can moreover show that
$u*\rho$ is actually of the form $u*\rho=T_{\phi}$ for the smooth function $\phi\in\Cc(\RR^{l}\times\RR^{k})$, given by
\[
\phi(x,y)=u_{x}(\tilde{\rho}\com L_{-y}).
\]

We will next show that the image of the map $\Cc(\RR^{l}\times\RR^{k})\hookrightarrow\Cc(\RR^{l},\cE'(\RR^{k}))$, given by $\phi\mapsto T_{\phi}$, 
is a dense subspace of $\Cc(\RR^{l},\cE'(\RR^{k}))$. 

To do that, we first recall from \cite{Kal22_2} an explicit description of a neighbourhood basis of zero in the space $\Cc(\RR^{l},\cE'(\RR^{k}))$.
Denote $K_{0}=\emptyset$ and let $K_{n}=\{x\in\RR^{l}\,|\,|x|\leq n\}$ be the ball
with centre at zero and radius $n\in\NN$. Choose an increasing sequence of natural numbers $\textbf{m}=(m_{1},m_{2},\ldots)$, 
a decreasing sequence of positive real numbers $\textbf{e}=(\epsilon_{1},\epsilon_{2},\ldots)$ and 
let $\textbf{B}=(B_{1},B_{2},\ldots)$ be an increasing sequence of bounded subsets of $\C(\RR^{k})$.
Now define a subset $V_{\textbf{B},\textbf{m},\textbf{e}}\subset\Cc(\RR^{l},\cE'(\RR^{k}))$ by
\[
V_{\textbf{B},\textbf{m},\textbf{e}}=\{u\in\Cc(\RR^{l},\cE'(\RR^{k}))\,|\,
p_{B_{n}}((D^{\alpha}u)_{x})<\epsilon_{n}\text{ for }x\in K_{n-1}^{c}\text{ and }|\alpha|\leq m_{n}\},
\]
where the seminorm $p_{B_{n}}$ on $\cE'(\RR^{k})$ is given by 
\[
p_{B_{n}}(v)=\sup_{F\in B_{n}}|v(F)|
\] 
for $v\in\cE'(\RR^{k})$. The family of all such sets $V_{\textbf{B},\textbf{m},\textbf{e}}$, with $\textbf{B}$, $\textbf{m}$ and $\textbf{e}$
as defined above, then forms a basis of neighbourhoods of zero for a topology on $\Cc(\RR^{l},\cE'(\RR^{k}))$, for which the natural identification
$\Cc(\RR^{l},\cE'(\RR^{k}))\cong\cE'_{\pi}(\RR^{l}\times\RR^{k})$ becomes an isomorphism of locally convex vector spaces (see \cite{Kal22_2}).

\begin{prop}\label{Smooth functions are dense in the space of distributions}
The image of the map $\Cc(\RR^{l}\times\RR^{k})\hookrightarrow\Cc(\RR^{l},\cE'(\RR^{k}))$, given by $\phi\mapsto T_{\phi}$, 
is a dense subspace of $\Cc(\RR^{l},\cE'(\RR^{k}))$.
\end{prop}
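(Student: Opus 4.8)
The plan is to approximate an arbitrary family $u\in\Cc(\RR^{l},\cE'(\RR^{k}))$ by its fibrewise convolutions $u*\rho_{n}$ with an approximate identity. Recall from the discussion preceding the statement that for every $\rho\in\Cc(\RR^{k})$ the family $u*\rho$ equals $T_{\phi}$ for the compactly supported smooth function $\phi(x,y)=u_{x}(\tilde{\rho}\com L_{-y})$, so each $u*\rho$ already lies in the image of the map $\phi\mapsto T_{\phi}$. Fixing a symmetric approximate identity $(\rho_{n})_{n\in\NN}$ in $\Cc(\RR^{k})$, with $\rho_{n}\geq 0$, $\int_{\RR^{k}}\rho_{n}=1$ and $\supp(\rho_{n})\subset\{z\,|\,|z|\leq 1/n\}$, it therefore suffices to show that $u*\rho_{n}\to u$ in $\Cc(\RR^{l},\cE'(\RR^{k}))$.

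To this end I would fix a basic neighbourhood $V_{\textbf{B},\textbf{m},\textbf{e}}$ of zero and set $w_{n}=u*\rho_{n}-u$. Since the convolution acts only in the fibre variable while $D^{\alpha}$ differentiates in the base variable, the two commute, giving $(D^{\alpha}w_{n})_{x}=(D^{\alpha}u)_{x}*\rho_{n}-(D^{\alpha}u)_{x}$; evaluating on $F\in\C(\RR^{k})$ via the convolution formula $(v*\rho_{n})(F)=v(y\mapsto\rho_{n}(F\com L_{y}))$ yields $(D^{\alpha}w_{n})_{x}(F)=(D^{\alpha}u)_{x}(F_{n}-F)$, where $F_{n}(y)=\int_{\RR^{k}}\rho_{n}(z)F(y+z)\,dz$. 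Because $u$ is compactly supported, there is an $M\in\NN$ with $\supp(u)\subset K_{M}\times\RR^{k}$; then for every $n\geq M+1$ and every $x\in K_{n-1}^{c}\subset K_{M}^{c}$ all the distributions $(D^{\alpha}u)_{x}$ vanish, so $(D^{\alpha}w_{n})_{x}=0$ and the level-$n$ condition in $V_{\textbf{B},\textbf{m},\textbf{e}}$ holds for every choice of $n$. This reduces the problem to the finitely many levels $n\in\{1,\ldots,M\}$.

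For these levels I would invoke equicontinuity. The set $\mathcal{K}=\{(D^{\alpha}u)_{x}\,|\,x\in\RR^{l},\,|\alpha|\leq m_{M}\}$ is compact in $\cE'(\RR^{k})$, being a finite union of continuous images of the ball $K_{M}$ (the maps $D^{\alpha}u$ vanish off $K_{M}$), and in particular bounded; since $\C(\RR^{k})$ is a Fr\'{e}chet, hence barrelled, space, $\mathcal{K}$ is then equicontinuous, so there is a single continuous seminorm $p=p_{L,m'}$ on $\C(\RR^{k})$ with $|v(G)|\leq p(G)$ for all $v\in\mathcal{K}$ and all $G\in\C(\RR^{k})$. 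Hence for $n\leq M$, $|\alpha|\leq m_{n}\leq m_{M}$ and $F\in B_{n}$ we obtain $|(D^{\alpha}w_{n})_{x}(F)|\leq p_{L,m'}(F_{n}-F)$ for all $x$. Differentiating $F_{n}$ under the integral sign and using $\supp(\rho_{n})\subset\{|z|\leq 1/n\}$ together with the mean value theorem bounds $p_{L,m'}(F_{n}-F)$ by $(1/n)\,p_{L',m'+1}(F)$, where $L'=\{z\in\RR^{k}\,|\,\mathrm{dist}(z,L)\leq 1\}$; since $B_{n}$ is bounded in $\C(\RR^{k})$ we have $\sup_{F\in B_{n}}p_{L',m'+1}(F)<\infty$, whence $\sup_{F\in B_{n}}p_{L,m'}(F_{n}-F)\to 0$ as $n\to\infty$. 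Choosing $n$ large enough to fall below each of the finitely many thresholds $\epsilon_{1},\ldots,\epsilon_{M}$ simultaneously places $w_{n}$ in $V_{\textbf{B},\textbf{m},\textbf{e}}$.

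I expect the main obstacle to be the tension between the two directions encoded in the topology of $\Cc(\RR^{l},\cE'(\RR^{k}))$: at large levels $n$ one must achieve arbitrarily small error $\epsilon_{n}$ uniformly over the ever-larger bounded sets $B_{n}$ and the ever-higher derivative orders $m_{n}$, a demand no single convolution parameter could meet. The argument only closes because these stringent conditions are imposed solely on the regions $K_{n-1}^{c}$ far from the origin, where the compactly supported family $u$ vanishes identically; this is exactly where the compact support hypothesis and the precise indexing of the neighbourhood basis from \cite{Kal22_2} are essential. The remaining quantitative input, namely equicontinuity of $\mathcal{K}$ and uniform convergence of the mollifications on each $B_{n}$, is then routine.
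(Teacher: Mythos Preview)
Your argument is correct and follows essentially the same strategy as the paper: mollify $u$ by an approximate identity in the fibre variable, use compact support of $u$ to make all but finitely many levels of the neighbourhood $V_{\textbf{B},\textbf{m},\textbf{e}}$ trivially satisfied, and then handle the remaining finitely many levels via boundedness of the set $\{(D^{\alpha}u)_{x}\}$. The only difference is in the last estimate: the paper writes $D^{\alpha}(u-u_{t})_{x}=(\delta_{0}-\rho_{t})*(D^{\alpha}u)_{x}$ and invokes joint continuity of the convolution $\cE'(\RR^{k})\times\cE'(\RR^{k})\to\cE'(\RR^{k})$ directly, whereas you unfold the convolution to $(D^{\alpha}u)_{x}(F_{n}-F)$, appeal to barrelledness for equicontinuity, and estimate $F_{n}-F$ by hand via the mean value theorem (be careful, though, that the same letter $n$ is doing double duty as mollifier index and neighbourhood level in your write-up).
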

\begin{proof} 
Let us choose a one-parameter family $(\rho_{t})\in\Cc(\RR^{k})\subset\cE'(\RR^{k})$, for $t\in(0,1)$, which converges to the Dirac
distribution $\delta_{0}\in\cE'(\RR^{k})$ as $t\to 0$.

Now choose any $u\in\Cc(\RR^{l},\cE'(\RR^{k}))$ and define $u_{t}=u*\rho_{t}\in\Cc(\RR^{l}\times\RR^{k})$ for $t\in(0,1)$. We will show that
$u_{t}\to u$ as $t\to 0$. To do that, take an arbitrary basic neighbourhood of zero in $\Cc(\RR^{l},\cE'(\RR^{k}))$
of the form $V_{\textbf{B},\textbf{m},\textbf{e}}$. Since $u$ is compactly supported, we have $\supp(u)=\supp(u_{t})\subset K_{n}$
for some $n\in\NN$ and all $t\in(0,1)$. We need to show that for $t$ small enough, we have 
$u-u_{t}\in V_{\textbf{B},\textbf{m},\textbf{e}}$, which by the above observation means that $p_{B_{n}}(D^{\alpha}(u-u_{t})_{x})<\epsilon_{n}$ for
all $x\in K_{n}$ and all $\alpha$ with $|\alpha|\leq m_{n}$. Equivalently, if we denote $\DD(\epsilon_{n})=\{z\in\CC\,|\,|z|<\epsilon_{n}\}$,
then for all such $x$ and $\alpha$ we have
\[
D^{\alpha}(u-u_{t})_{x}\in K(B_{n},\DD(\epsilon_{n}))\subset\cE'(\RR^{k}).
\] 
Now note that the set $A=\{(D^{\alpha}u)_{x}\,|\,x\in K_{n},|\alpha|\leq m_{n}\}$ is compact and hence a bounded subset of 
$\cE'(\RR^{k})$. Since the convolution $*:\cE'(\RR^{k})\times\cE'(\RR^{k})\to\cE'(\RR^{k})$ is bilinear and continuous,
we can find a neighbourhood $V$ of zero in $\cE'(\RR^{k})$ such that $V*A\subset K(B_{n},\DD(\epsilon_{n}))$. Since 
$\rho_{t}\to\delta_{0}$ as $t\to 0$, we have that $\delta_{0}-\rho_{t}\in V$ for $t$ small enough and hence
\[
D^{\alpha}(u-u_{t})_{x}=(D^{\alpha}u)_{x}-(D^{\alpha}u*\rho_{t})_{x}
=(\delta_{0}-\rho_{t})*(D^{\alpha}u)_{x}\in K(B_{n},\DD(\epsilon_{n})).
\]
\end{proof}

We will next show, by using ideas from Riemannian integration, 
that arbitrary transversal distribution of the form $T_{\phi}\in\Cc(\RR^{l},\cE'(\RR^{k}))$
can be approximated by elements of $\Dirac_{\pi}(\RR^{l}\times\RR^{k})$.

Choose any $L>0$ and any $n\in\NN$ and denote $t_{j}=-\frac{L}{2}+\frac{jL}{n}$ for $0\leq j\leq n-1$.
The set $I=\{t_{0},t_{1},\ldots,t_{n-1}\}^{k}$ is then a finite subset of the cube $D=[-\frac{L}{2},\frac{L}{2}]^{k}$.
If we denote for $t\in I$ by $D_{t}=t+[0,\frac{L}{n}]^{k}$ the cube with volume $\text{vol}(D_{t})=\left(\frac{L}{n}\right)^{k}$,
we can express $D=\bigcup_{t\in I}D_{t}$ as a union of $n^{k}$ such small cubes. Now define a distribution 
$\Delta_{n}\in\Dirac(\RR^{k})\subset\cE'(\RR^{k})$ by
\[
\Delta_{n}=\left(\tfrac{L}{n}\right)^{k}\sum_{t\in I}\delta_{t}.
\]
Using the fundamental theorem of calculus one can show that for any $F\in\C(\RR^{k})$ we have the
following bound
\[
\left|\int_{D}F(y)\,dy-\Delta_{n}(F)\right|\leq\frac{kL^{k+1}}{n}p_{D,1}(F),
\]
where $p_{D,1}$ measures the size of the gradient of $F$ and is defined as in Subsection \ref{Subsection Locally convex spaces}.
This bound basically says that the sequence $(\Delta_{n})_{n\in\NN}$ converges to $\int_{D}$ in $\cE'(\RR^{k})$.

\begin{prop}\label{Proposition Denseness of Dirac distributions}
The space $\Dirac_{\pi}(\RR^{l}\times\RR^{k})$ is a dense subspace of $\Cc(\RR^{l},\cE'(\RR^{k}))$.
\end{prop}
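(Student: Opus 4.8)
The plan is to combine the approximation result of Proposition~\ref{Smooth functions are dense in the space of distributions} with a Riemann-sum discretization of the densities $\phi(x,-)\,dV$. Since the subspace $\{T_{\phi}\,|\,\phi\in\Cc(\RR^{l}\times\RR^{k})\}$ is already dense in $\Cc(\RR^{l},\cE'(\RR^{k}))$ and the closure of $\Dirac_{\pi}(\RR^{l}\times\RR^{k})$ is closed, it suffices to show that every such $T_{\phi}$ lies in the closure of $\Dirac_{\pi}(\RR^{l}\times\RR^{k})$; density of $\Dirac_{\pi}$ then follows by transitivity. I would therefore fix $\phi$ and exhibit a sequence in $\Dirac_{\pi}$ converging to $T_{\phi}$.

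For the construction I would reuse the cubes from the paragraph preceding the statement. Choose $L>0$ so large that $\supp(\phi)\subset\RR^{l}\times\mathrm{int}(D)$ with $D=[-\tfrac{L}{2},\tfrac{L}{2}]^{k}$, and for each $n$ let $I$ and the grid points $t\in I$ be as above. I would then set
\[
u^{(n)}=\sum_{t\in I}\llbracket E_{t},\big(\tfrac{L}{n}\big)^{k}\phi(-,t)\rrbracket\in\Dirac_{\pi}(\RR^{l}\times\RR^{k}),
\]
which is a genuine element of $\Dirac_{\pi}$ because $I$ is finite and each $\phi(-,t)\in\Cc(\RR^{l})$. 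Fibrewise this reads $(u^{(n)})_{x}=\big(\tfrac{L}{n}\big)^{k}\sum_{t\in I}\phi(x,t)\,\delta_{t}$, so for $F\in\C(\RR^{k})$ one has $(T_{\phi})_{x}(F)=\int_{D}\phi(x,y)F(y)\,dy$ and $(u^{(n)})_{x}(F)=\Delta_{n}(\phi(x,-)F)$. Thus the fibrewise error is exactly the Riemann-sum defect $\int_{D}G-\Delta_{n}(G)$ for $G=\phi(x,-)F$, to which the estimate established just above the statement applies.

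To establish convergence I would fix an arbitrary basic neighbourhood $V_{\textbf{B},\textbf{m},\textbf{e}}$ of zero. Because $\phi$ is compactly supported there is an $N$ with $\supp(\phi)\subset K_{N}\times\RR^{k}$, and the same holds for each $u^{(n)}$; hence $(T_{\phi}-u^{(n)})_{x}=0$ whenever $x\in K_{n'-1}^{c}$ with $n'\geq N+1$, so all but finitely many of the defining conditions of $V_{\textbf{B},\textbf{m},\textbf{e}}$ are automatically met. For the remaining indices $n'\leq N$, differentiation in $x$ commutes with fibrewise evaluation, giving $D^{\alpha}(T_{\phi}-u^{(n)})_{x}(F)=\int_{D}(D^{\alpha}_{x}\phi)(x,y)F(y)\,dy-\Delta_{n}\big((D^{\alpha}_{x}\phi)(x,-)F\big)$, and the Riemann-sum bound yields
\[
p_{B_{n'}}\big(D^{\alpha}(T_{\phi}-u^{(n)})_{x}\big)\leq\frac{kL^{k+1}}{n}\sup_{F\in B_{n'}}p_{D,1}\big((D^{\alpha}_{x}\phi)(x,-)F\big).
\]
Since the contributing $x$ lie in the compact set $K_{N}$, all $x$-derivatives of $\phi$ are uniformly $C^{1}$-bounded on $D$, and each $B_{n'}$ is bounded in $\C(\RR^{k})$ so that $\sup_{F\in B_{n'}}p_{D,1}(F)<\infty$; hence the supremum above is bounded by a constant $C_{n'}$ independent of $n$. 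Choosing $n$ with $C_{n'}/n<\epsilon_{n'}$ for all $n'\leq N$ simultaneously (possible since there are finitely many) gives $T_{\phi}-u^{(n)}\in V_{\textbf{B},\textbf{m},\textbf{e}}$.

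I expect the main obstacle to be the simultaneous uniformity demanded by the topology on $\Cc(\RR^{l},\cE'(\RR^{k}))$: the error must be controlled not merely pointwise in $x$ but uniformly on the non-compact slabs $K_{n'-1}^{c}$, across all $x$-derivatives up to order $m_{n'}$, and uniformly over the bounded test sets $B_{n'}$. The compact support of $\phi$ is what collapses the infinitely many scale-conditions to finitely many over a single compact $x$-region, and the boundedness of each $B_{n'}$ in the Fr\'{e}chet topology of $\C(\RR^{k})$ is what turns the $p_{D,1}$-factor into a finite constant; once these are in hand, the single factor $1/n$ coming from the Riemann-sum estimate drives every surviving condition below its threshold.
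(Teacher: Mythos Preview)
Your proof is correct and follows essentially the same strategy as the paper's: reduce to approximating $T_{\phi}$ via Proposition~\ref{Smooth functions are dense in the space of distributions}, use the Riemann-sum transversal distribution $u^{(n)}$ (which is exactly the paper's $\Delta_{\phi,n}$), exploit the compact support of $\phi$ to collapse the neighbourhood conditions to finitely many over a single compact $K_{N}$, and control the remaining ones with the $kL^{k+1}/n$ estimate combined with boundedness of the test sets. The only cosmetic difference is that you work with the fibrewise picture ($F\in\C(\RR^{k})$, consistent with the stated definition of $V_{\textbf{B},\textbf{m},\textbf{e}}$) and split the product bound into a $C^{1}$-bound on $D^{\alpha}_{x}\phi$ times a bound on $B_{n'}$, whereas the paper absorbs $\phi$ into the test function by passing to the bounded set $\tilde{B}_{j}=\phi B_{j}\subset\C(\RR^{l}\times\RR^{k})$ and applies $p_{K_{j}\times D,\,m_{j}+1}$ to the product directly.
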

\begin{proof} 
By Proposition \ref{Smooth functions are dense in the space of distributions} it is enough to show that 
for every $\phi\in\Cc(\RR^{l}\times\RR^{k})$ the distribution
$T_{\phi}\in\Cc(\RR^{l},\cE'(\RR^{k}))$ can be approximated arbitrarily well by elements of $\Dirac_{\pi}(\RR^{l}\times\RR^{k})$.

Choose any $\phi\in\Cc(\RR^{l}\times\RR^{k})$ and suppose $\pi_{\RR^{k}}(\supp(\phi))\subset D\subset\RR^{k}$ for some $L>0$ as above.
For $n\in\NN$ we define a $\pi$-transversal distribution $\Delta_{\phi,n}\in\Dirac_{\pi}(\RR^{l}\times\RR^{k})$ by the formula
\[
\Delta_{\phi,n}(F)(x)=\left(\tfrac{L}{n}\right)^{k}\sum_{t\in I}\phi(x,t)F(x,t).
\]
We will show that $\Delta_{\phi,n}\to T_{\phi}$ in $\Cc(\RR^{l},\cE'(\RR^{k}))$ as $n\to\infty$.
This means that for every set of the form $V_{\textbf{B},\textbf{m},\textbf{e}}\subset\Cc(\RR^{l},\cE'(\RR^{k}))$ 
we have $T_{\phi}-\Delta_{\phi,n}\in V_{\textbf{B},\textbf{m},\textbf{e}}$ for $n\in\NN$ big enough.
Both $T_{\phi}$ and $\Delta_{\phi,n}$ have supports contained in $\pi(\supp(\phi))\subset K_{j}$ for some $j\in\NN$, so we have to show
that 
\[
|D^{\alpha}_{x}(T_{\phi}(F)-\Delta_{\phi,n}(F))(x)|<\epsilon_{j}
\]
for $x\in K_{j}$, $F\in B_{j}$ and $|\alpha|\leq m_{j}$. Since $B_{j}$ is a bounded subset of $\C(\RR^{l}\times\RR^{k})$, the set
$\tilde{B}_{j}=\phi B_{j}=\{\tilde{F}=\phi F\,|\,F\in B_{j}\}$ is bounded in $\C(\RR^{l}\times\RR^{k})$ as well, so there exists
a constant $C<\infty$ such that $\sup\{p_{K_{j}\times D,m_{j}+1}(\tilde{F})\,|\,\tilde{F}\in\tilde{B}_{j}\}<C$. For $F\in B_{j}$, $x\in K_{j}$
and $|\alpha|\leq m_{j}$ we now compute:
\begin{align*}
|D^{\alpha}_{x}(T_{\phi}(F)-\Delta_{\phi,n}(F))(x)|
&=\left|\int_{D}D^{\alpha}_{x}(\phi(x,y)F(x,y))\,dy-\left(\tfrac{L}{n}\right)^{k}\sum_{t\in I}D^{\alpha}_{x}(\phi(x,t)F(x,t))\right|, \\
&=\left|\int_{D}(D^{\alpha}_{x}\tilde{F})(x,y)\,dy-\left(\tfrac{L}{n}\right)^{k}\sum_{t\in I}(D^{\alpha}_{x}\tilde{F})(x,t)\right|, \\
&\leq \frac{kL^{k+1}}{n}p_{K_{j}\times D,m_{j}+1}(\tilde{F})<\frac{CkL^{k+1}}{n}.
\end{align*}
We conclude that $\Delta_{\phi,n}\to T_{\phi}$ in $\Cc(\RR^{l},\cE'(\RR^{k}))$ as $n\to\infty$.
\end{proof}

As a corollary we get the following result.

\begin{prop}
Let $M\times N$ be a trivial bundle over $M$ with fiber $N$ and projection $\pi:M\times N\to M$. 
The space $\Dirac_{\pi}(M\times N)$ is a dense subspace of $\cE'_{\pi}(M\times N)$.
\end{prop}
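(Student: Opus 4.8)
The plan is to reduce the general statement to the euclidean case already established in Proposition \ref{Proposition Denseness of Dirac distributions} by means of a partition of unity argument. Fix $T\in\cE'_{\pi}(M\times N)$ with corresponding family $u=u(T)\in\Cc(M,\cE'(N))$, whose total support $\supp(T)$ is a compact subset of $M\times N$. Since every point of $M\times N$ has a product chart neighbourhood $U\times V$ with $U\subset M$ and $V\subset N$ coordinate charts diffeomorphic to open subsets of $\RR^{l}$ respectively $\RR^{k}$ (where $l=\dim M$ and $k=\dim N$), compactness of $\supp(T)$ lets me choose finitely many such charts $U_{1}\times V_{1},\ldots,U_{m}\times V_{m}$ covering $\supp(T)$, together with functions $\chi_{1},\ldots,\chi_{m}\in\Cc(M\times N)$ with $\supp(\chi_{i})\subset U_{i}\times V_{i}$ and $\sum_{i=1}^{m}\chi_{i}\equiv 1$ on an open neighbourhood of $\supp(T)$.

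The second ingredient is that any $\chi\in\C(M\times N)$ induces a continuous linear operator on $\cE'_{\pi}(M\times N)$ by $(\chi\cdot T)(F)=T(\chi F)$ for $F\in\C(M\times N)$; this is well defined and $\Cc(M)$-linear because $\Cc(M)$ acts only through the first variable, and its continuity follows from continuity of the multiplication $\C(M\times N)\times\C(M\times N)\to\C(M\times N)$. One checks directly that $\chi\cdot\llbracket E_{y},f\rrbracket=\llbracket E_{y},f\,\chi(\cdot,y)\rrbracket$, so this operation preserves $\Dirac_{\pi}(M\times N)$. Applying it with the $\chi_{i}$ above and using that $T$ annihilates functions vanishing near $\supp(T)$, I obtain the finite decomposition $T=\sum_{i=1}^{m}\chi_{i}\cdot T$, in which each summand $\chi_{i}\cdot T$ is a $\pi$-transversal distribution with compact support contained in the product chart $U_{i}\times V_{i}$.

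It now suffices to approximate each piece $\chi_{i}\cdot T$ by elements of $\Dirac_{\pi}(M\times N)$ supported in $U_{i}\times V_{i}$ and then sum the approximations, since addition and the operators $\chi\cdot(-)$ are continuous. Transporting along the chart diffeomorphisms $U_{i}\cong\Omega_{i}\subset\RR^{l}$ and $V_{i}\cong\Omega_{i}'\subset\RR^{k}$ identifies the subspace of $\cE'_{\pi}(M\times N)$ of distributions with compact support in $U_{i}\times V_{i}$, topologically and $\Cc$-linearly, with the subspace of $\cE'_{\pi}(\RR^{l}\times\RR^{k})\cong\Cc(\RR^{l},\cE'(\RR^{k}))$ of distributions with compact support in $\Omega_{i}\times\Omega_{i}'$, and carries $\Dirac_{\pi}$ into $\Dirac_{\pi}$. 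Under this identification Proposition \ref{Proposition Denseness of Dirac distributions} provides a sequence in $\Dirac_{\pi}(\RR^{l}\times\RR^{k})$ converging to the image of $\chi_{i}\cdot T$; inspecting the construction of the approximants $\Delta_{\phi,n}$ (and of the mollification used in Proposition \ref{Smooth functions are dense in the space of distributions}) shows their supports can be kept inside $\Omega_{i}\times\Omega_{i}'$, so they transfer back to genuine elements of $\Dirac_{\pi}(M\times N)$ supported in $U_{i}\times V_{i}$. Summing over $i$ yields elements of $\Dirac_{\pi}(M\times N)$ converging to $T$, which proves density.

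The main obstacle is the bookkeeping in this last step. One must verify that transport of structure along a product of open chart embeddings is a homeomorphism onto the subspace of compactly-supported-in-chart transversal distributions, respecting both the module identification $\cE'_{\pi}\cong\Cc(\cdot\,,\cE'(\cdot))$ and the strong topology, and that the euclidean approximants have supports confined to the chart so that extension by zero is legitimate. The support control is the delicate point, because the mollification step $u*\rho_{t}$ of Proposition \ref{Smooth functions are dense in the space of distributions} enlarges supports slightly; this is handled by choosing the charts with a little room to spare and keeping the mollifier support (that is, $t$) small enough that the enlarged supports still lie inside $\Omega_{i}'$.
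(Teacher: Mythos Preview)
Your proof is correct and follows essentially the same approach as the paper: decompose $T$ into finitely many pieces supported in product coordinate charts, invoke the euclidean density result (Proposition \ref{Proposition Denseness of Dirac distributions}) on each piece, and sum. The paper is terser---it simply writes $T=T_{1}+\cdots+T_{n}$ with $\supp(T_{i})\subset U_{i}\times U_{i}'$ and takes $U_{i}\approx\RR^{l}$, $U_{i}'\approx\RR^{k}$ so that Proposition \ref{Proposition Denseness of Dirac distributions} applies directly---whereas you spell out the partition of unity and the support bookkeeping that the paper leaves implicit.
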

\begin{proof} 
Every transversal distribution $T\in\cE'_{\pi}(M\times N)$ has compact support, so we can write it as a sum
\[
T=T_{1}+T_{2}+\ldots+T_{n},
\]
where each $T_{i}\in\cE'_{\pi}(M\times N)$ has support contained in the set of the form $U_{i}\times U_{i}'$ for
some domains of coordinate charts $U_{i}\approx\RR^{l}$ on $M$ and $U_{i}'\approx\RR^{k}$ on $N$. By Proposition 
\ref{Proposition Denseness of Dirac distributions} we can find for each neighbourhood
of zero $V\subset\cE'_{\pi}(M\times N)$ elements $u_{i}\in\Dirac_{\pi_{i}}(U_{i}\times U_{i}')\subset\Dirac_{\pi}(M\times N)$,
such that $T_{i}-u_{i}\in\frac{1}{n}V$ for $1\leq i\leq n$. 
If we define $u=u_{1}+u_{2}+\ldots+u_{n}\in\Dirac_{\pi}(M\times N)$, we then have $T-u\in V$.
\end{proof}

Let us now denote for simplicity by:
\begin{align*}
\Dirac_{\pi}(M\times N)'&=\Hom_{\Cc(M)}(\Dirac_{\pi}(M\times N),\Cc(M)), \\
\cE'_{\pi}(M\times N)'&=\Hom_{\Cc(M)}(\cE'_{\pi}(M\times N),\Cc(M))
\end{align*}
the $\Cc(M)$-duals of the $\Cc(M)$-modules $\Dirac_{\pi}(M\times N)$ and $\cE'_{\pi}(M\times N)$. Define
a $\Cc(M)$-linear map
\[
\hat{}:\C(M\times N)\to\Dirac_{\pi}(M\times N)',
\]
by 
\[
\hat{F}(u)=u(F)
\] 
for $F\in\C(M\times N)$ and $u\in\Dirac_{\pi}(M\times N)$.

\begin{theo}\label{Strong dual of distributions of Dirac type}
Let $M\times N$ be a trivial bundle over $M$ with fiber $N$ and bundle projection $\pi:M\times N\to M$. 
The map $\hat{}:\C(M\times N)\to\Dirac_{\pi}(M\times N)'$ is an isomorphism of locally convex $\Cc(M)$-modules.
\end{theo}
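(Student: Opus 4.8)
The plan is to establish that $\hat{}$ is a bijective $\Cc(M)$-linear map which is moreover a homeomorphism, splitting the argument into the algebraic isomorphism and the bicontinuity. The two formal properties are immediate. Since every $u\in\Dirac_{\pi}(M\times N)$ is $\Cc(M)$-linear, $\widehat{f\cdot F}(u)=u(f\cdot F)=f\cdot u(F)=(f\cdot\hat F)(u)$, so $\hat{}$ is $\Cc(M)$-linear; and testing against the generators $\llbracket E_{y},f\rrbracket$ shows that $\hat F=0$ forces $f(x)F(x,y)=0$ for all $f,x,y$, hence $F\equiv 0$, giving injectivity. The content of the theorem is therefore surjectivity together with the fact that $\hat{}$ and its inverse are continuous.

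For surjectivity I would reconstruct $F$ from $\psi\in\Dirac_{\pi}(M\times N)'$ pointwise: for $(x,y)\in M\times N$ pick any $f\in\Cc(M)$ with $f(x)=1$ and set $F(x,y)=\psi(\llbracket E_{y},f\rrbracket)(x)$. This is legitimate for two reasons. First, a \emph{locality} property: if $h\in\Cc(M)$ vanishes near $x$, choosing $\chi$ with $\chi(x)=1$ and support inside $\{h=0\}$ gives $\chi\cdot\llbracket E_{y},h\rrbracket=\llbracket E_{y},\chi h\rrbracket=0$, whence $\psi(\llbracket E_{y},h\rrbracket)(x)=\chi(x)\psi(\llbracket E_{y},h\rrbracket)(x)=0$ by $\Cc(M)$-linearity; thus $\psi(\llbracket E_{y},f\rrbracket)(x)$ depends only on the germ of $f$ at $x$. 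Second, from $f_{2}\cdot\llbracket E_{y},f_{1}\rrbracket=\llbracket E_{y},f_{1}f_{2}\rrbracket=f_{1}\cdot\llbracket E_{y},f_{2}\rrbracket$ and linearity one gets $f_{2}(x)\psi(\llbracket E_{y},f_{1}\rrbracket)(x)=f_{1}(x)\psi(\llbracket E_{y},f_{2}\rrbracket)(x)$, which shows both that $F(x,y)$ is independent of the choice of $f$ and, together with the factorization $\llbracket E_{y},f\rrbracket=f\cdot\llbracket E_{y},1_{f}\rrbracket$, that $\psi(\llbracket E_{y},f\rrbracket)(x)=f(x)F(x,y)=\hat F(\llbracket E_{y},f\rrbracket)(x)$ for \emph{every} $f$. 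Hence $\hat F=\psi$ on all generators, so on all of $\Dirac_{\pi}(M\times N)$.

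The main obstacle is proving that this pointwise $F$ is smooth, and the difficulty is structural: the Dirac generators only record point values of $F$, so the smooth dependence on the fibre variable cannot be read off $\psi|_{\Dirac_{\pi}}$ directly. The idea is to pass through the ambient space. Since $\Dirac_{\pi}(M\times N)$ is dense in the complete space $\cE'_{\pi}(M\times N)$ and $\Cc(M)$ is complete, $\psi$ extends uniquely to a continuous $\Cc(M)$-linear map $\bar\psi\colon\cE'_{\pi}(M\times N)\to\Cc(M)$. Fix a chart $U$ on $M$ and $f$ with $f\equiv 1$ on $U$. Under $\cE'_{\pi}(M\times N)\cong\Cc(M,\cE'(N))$ the map $\Theta\colon y\mapsto\llbracket E_{y},f\rrbracket$ corresponds to $y\mapsto f\ten\delta_{y}$; since $y\mapsto\delta_{y}$ is smooth into $\cE'(N)$ (it is scalarwise smooth, as $y\mapsto\delta_{y}(G)=G(y)$ is smooth for each $G\in\C(N)$, and $\cE'(N)$ is complete), $\Theta$ is smooth into $\cE'_{\pi}(M\times N)$. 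Its $y$-derivatives are derivatives of Dirac distributions and leave $\Dirac_{\pi}(M\times N)$, which is precisely why the extension $\bar\psi$ is needed. Then $\bar\psi\com\Theta\colon N\to\Cc(M)$ is smooth and coincides with $y\mapsto\psi(\llbracket E_{y},f\rrbracket)$; composing with $\Cc(M)\hookrightarrow\C(M)$ and invoking the exponential law $\C(N,\C(M))\cong\C(N\times M)$ shows $(x,y)\mapsto\psi(\llbracket E_{y},f\rrbracket)(x)$ is jointly smooth, so $F$ equals a smooth function on $U\times N$. Covering $M$ by such charts yields $F\in\C(M\times N)$.

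It remains to prove bicontinuity with respect to the seminorms $p_{L,m}$ on $\C(M\times N)$ and the strong neighbourhoods $K(B,V)$, with $B\subset\Dirac_{\pi}(M\times N)$ bounded, on $\Dirac_{\pi}(M\times N)'$. For continuity of $\hat{}$ I would use that bounded subsets of $\cE'_{\pi}(M\times N)\cong\Cc(M,\cE'(N))$ have uniformly compact support and, since $\C(N)$ is barrelled, consist of equicontinuous hence uniformly finite-order distributions along the fibre; consequently $\{u(F)\mid u\in B\}$ is dominated by finitely many seminorms $p_{L,m}(F)$, giving $\hat{}(O)\subset K(B,V)$ for a suitable neighbourhood $O$. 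For continuity of the inverse the key point, and the technically most delicate, is that even though a single Dirac only detects values of $F$, the iterated difference quotients of the families $y\mapsto\llbracket E_{y},f\rrbracket$ are again elements of $\Dirac_{\pi}(M\times N)$ and form bounded sets; evaluating $\hat F$ on a suitable bounded family built from such weighted difference quotients recovers all mixed derivatives $D^{\alpha}F$ on a given compactum, so that $p_{L,m}(F)$ is controlled by the size of $\hat F$ on a single bounded set $B$. (Alternatively, once the algebraic isomorphism and the extension $\bar\psi$ are in hand, bicontinuity can be reduced to the reflexivity of the nuclear Fr\'echet space $\C(M\times N)$.) This recovery of the fibre derivatives from value-type functionals is the crux of the bicontinuity argument.
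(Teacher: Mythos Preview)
Your proposal is correct and follows essentially the same architecture as the paper's proof: both obtain surjectivity by extending $\psi$ by density to a continuous $\Cc(M)$-linear functional $\bar\psi$ on the completion $\cE'_{\pi}(M\times N)$, and both prove openness of $\hat{}$ by building bounded families in $\Dirac_{\pi}(M\times N)$ out of iterated finite-difference combinations of Dirac distributions that recover the fibrewise derivatives of $F$. The only notable difference is presentational: where the paper invokes the reflexivity result $\cE'_{\pi}(M\times N)'\cong\C(M\times N)$ from \cite{Kal22_2} to read off both the smoothness of $F$ and the continuity of $\hat{}$, you instead argue directly via the smoothness of $y\mapsto\delta_{y}$ and the exponential law, and sketch an equicontinuity argument for continuity---this makes your write-up more self-contained but is not a genuinely different route.
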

\begin{proof}
We first show that the map $\hat{}:\C(M\times N)\to\Dirac_{\pi}(M\times N)'$ is a $\Cc(M)$-linear isomorphism. It is injective
since $\Dirac_{\pi}(M\times N)$ separates the points of $\C(M\times N)$. To see that it is surjective, choose any
$\phi\in\Dirac_{\pi}(M\times N)'$. Since $\Dirac_{\pi}(M\times N)$ is a dense subspace of $\cE'_{\pi}(M\times N)$
and since $\Cc(M)$ is complete, there exists a unique continuous extension $\overline{\phi}:\cE'_{\pi}(M\times N)\to\Cc(M)$
of $\phi$ to $\cE'_{\pi}(M\times N)$. From Theorem $4.5$ in \cite{Kal22_2} it now follows that $\overline{\phi}=\hat{F}$
for some $F\in\C(M\times N)$.

It remains to be shown that the map $\hat{}:\C(M\times N)\to\Dirac_{\pi}(M\times N)'$ is a homeomorphism. It is
continuous as it can be written as a composition
\[
\C(M\times N)\overset{\hat{}}{\longrightarrow}\cE'_{\pi}(M\times N)'\longrightarrow\Dirac_{\pi}(M\times N)',
\]
where the left map is continuous by Theorem $4.5$ in \cite{Kal22_2} and the right map is the continuous restriction of functionals from
$\cE'_{\pi}(M\times N)$ to $\Dirac_{\pi}(M\times N)$.

In the remainder of the proof we will show that the above map is open. Let us choose an arbitrary subbasic neighbourhood
of zero in $\C(M\times N)$ of the form
\[
V_{L\times K,m,\epsilon}=\{F\in\C(M\times N)\,|\,|D^{\alpha}_{x}D^{\beta}_{y}F(x,y)|<\epsilon\text{ for }(x,y)\in L\times K,\,|\alpha|+|\beta|\leq m\},
\]
where $m\in\NN$, $\epsilon>0$, $L$ is a compact subset of $M$ which lies in some chart $U_{M}\approx\RR^{l}$ and $K$ is a compact subset of $N$ which
lies in some chart $U_{N}\approx\RR^{k}$. Our goal is to find a bounded subset $B\subset\Dirac_{\pi}(M\times N)$ and a
neighbourhood $V$ of zero in $\Cc(M)$ such that $K(B,V)\subset\widehat{V_{L\times K,m,\epsilon}}$.

For $n\in\NN$, $t\in(0,\infty)$ and $y\in\RR$ we define a distribution $\Delta_{t}^{n}(y)\in\Dirac(\RR)$ by
\[
\Delta_{t}^{n}(y)=\frac{1}{(2t)^{n}}\sum_{k=0}^{n}(-1)^{k}\binom{n}{k}\delta_{y+(n-2k)t}.
\]
Using the Taylor's theorem one can show that $\Delta_{t}^{n}(y)$ converges in $\cE'(\RR)$ to $D^{n}_{y}|_{y}$ as $t\to 0$,
where $D^{n}_{y}|_{y}$ is the distribution which computes the $n$-th derivative at the point $y$.
More generally, denote $\beta=(\beta_{1},\beta_{2},\ldots,\beta_{k})\in\NN_{0}^{k}$, 
$y=(y_{1},y_{2},\ldots,y_{k})\in\RR^{k}$ and define
\[
\Delta_{t}^{\beta}(y)=\Delta_{t}^{\beta_{1}}(y_{1})\otimes\Delta_{t}^{\beta_{2}}(y_{2})\otimes\cdots\otimes\Delta_{t}^{\beta_{k}}(y_{k})\in\cE'(\RR^{k}).
\]
Again we have that $\Delta_{t}^{\beta}(y)\in\Dirac(\RR^{k})$ converges to $D^{\beta}_{y}|_{y}$ in $\cE'(\RR^{k})$ as $t\to 0$. Using $K$ and $m$ from the
definition of $V_{L\times K,m,\epsilon}$ we now define the subset
\[
B_{K,m}=\{\Delta_{t}^{\beta}(y)\,|\,y\in K, t\in(0,1),|\beta|\leq m\}\subset\Dirac(\RR^{k})\subset\cE'(\RR^{k}).
\]
Using estimates from the Taylor's theorem one can show that $B_{K,m}$ is a bounded subset of $\cE'(\RR^{k})$.
Now note that the bilinear map $\Cc(M)\times\cE'(N)\to\Cc(M,\cE'(N))$, given by $(f,v)\mapsto fv$ for $(fv)_{x}=f(x)v$, is
continuous. If we choose a function $\eta\in\Cc(U_{M})\subset\Cc(M)$, such that $\eta\equiv 1$ on some neighbourhood of $L$, 
it now follows from the above observation that
\[
B=\eta B_{K,m}=\{\eta v\,|\,v\in B_{K,m}\}
\]
is a bounded subset of $\Dirac_{\pi}(M\times N)$. Finally, let us define an open neighbourhood $V$ of zero in $\Cc(M)$ by
\[
V=\{f\in\Cc(M)\,|\,|D^{\alpha}_{x}f(x)|<\tfrac{\epsilon}{2}\text{ for }x\in L,\,|\alpha|\leq m\}.
\]

Now choose any $\phi=\hat{F}\in K(B,V)$ so that $\phi(u)=u(F)\in V$ for $u\in B$. If we write $u=\eta v=\eta\sum_{i=1}^{n}a_{i}\delta_{y_{i}}$
for some $a_{1},\ldots,a_{n}\in\CC$ and some $y_{1},\ldots,y_{n}\in U_{N}$, we have for $x\in L$ and $|\alpha|\leq m$ the following estimate
\[
|u(D^{\alpha}_{x}F)(x)|=|\eta(x)\sum_{i=1}^{n}a_{i}(D^{\alpha}_{x}F)(x,y_{i})|
=|D^{\alpha}_{x}(u(F))(x)|<\tfrac{\epsilon}{2}.
\]
Here we have used the fact that $\eta\equiv 1$ on some neighbourhood of $L$ and denoted by $D^{\alpha}_{x}F$ 
the $\alpha$-partial derivative of $F$ in the horizontal direction. For any $y\in K$ and any $\beta$ with $|\beta|\leq m$ the net
$\Delta_{t}^{\beta}(y)\in B_{K,m}$ converges to $D^{\beta}_{y}|_{y}$ in $\cE'(\RR^{k})$ as $t\to 0$. 
If we define $u_{t}=\eta \Delta_{t}^{\beta}(y)\in B$, we then have for $x\in L$
the estimate
\[
|D^{\beta}_{y}D^{\alpha}_{x}F(x,y)|=\lim_{t\to 0}|u_{t}(D^{\alpha}_{x}F)(x)|\leq\tfrac{\epsilon}{2}<\epsilon.
\]
To sum it up, for $(x,y)\in L\times K$ and $|\alpha|,|\beta|\leq m$ we have $|D^{\beta}_{y}D^{\alpha}_{x}F(x,y)|<\epsilon$, which
implies that $F\in V_{L\times K,m,\epsilon}$ and consequently $\phi=\hat{F}\in\widehat{V_{L\times K,m,\epsilon}}$.
\end{proof}

\section{Spectral bundle of the coalgebra of transversal distributions of constant Dirac type}\label{Section Spectral bundle of coalgebra of Dirac distributions}

From the Theorem \ref{Strong dual of distributions of Dirac type} it follows that $\C(M\times N)$ is
isomorphic to the strong $\Cc(M)$-dual of $\Dirac_{\pi}(M\times N)$. We will now equip the space $\Dirac_{\pi}(M\times N)$
with a structure of a locally convex coalgebra over $\Cc(M)$, such that its strong $\Cc(M)$-dual $\Dirac_{\pi}(M\times N)'$
is a Fr\'{e}chet algebra, isomorphic to $\C(M\times N)$.

We will use the isomorphism $\Psi_{M\times N}:\Cc(M\times\Nd)\to\Dirac_{\pi}(M\times N)$
to transfer coalgebra structure from $\Cc(M\times\Nd)$ to $\Dirac_{\pi}(M\times N)$. Explicitly, using the
notation from the Example \ref{Example sheaf coalgebras}, we define
on $\Dirac_{\pi}(M\times N)$ a structure of a coalgebra over $\Cc(M)$ with structure maps:
\begin{align*}
\cm&:\Dirac_{\pi}(M\times N)\to\Dirac_{\pi}(M\times N)\otimes_{\Cc(M)}\Dirac_{\pi}(M\times N), \\
\cu&:\Dirac_{\pi}(M\times N)\to\Cc(M),
\end{align*}
explicitly given by:
\begin{align*}
\cm(\sum_{i=1}^{n}\llbracket E_{y_{i}},f_{i}\rrbracket)&=\sum_{i=1}^{n}\llbracket E_{y_{i}},f_{i}\rrbracket\otimes\llbracket E_{y_{i}},1_{f_{i}}\rrbracket, \\
\cu(\sum_{i=1}^{n}\llbracket E_{y_{i}},f_{i}\rrbracket)&=\sum_{i=1}^{n}f_{i}.
\end{align*}

Since $\C(M\times N)$ is isomorphic to the strong dual of $\Dirac_{\pi}(M\times N)$, we can use it to define
the $\Cc(M)$-injective topology on $\Dirac_{\pi}(M\times N)\otimes_{\Cc(M)}\Dirac_{\pi}(M\times N)$. 
For any pair of functions $F,G\in\C(M\times N)$ we define a $\Cc(M)$-linear map 
\[
F\otimes G:\Dirac_{\pi}(M\times N)\otimes_{\Cc(M)}\Dirac_{\pi}(M\times N)\to\Cc(M)
\]
by
\[
(F\otimes G)(\sum_{i=1}^{n}u_{i}'\otimes u_{i}'')=\sum_{i=1}^{n}u_{i}'(F)u_{i}''(G).
\]
The $\Cc(M)$-injective topology on $\Dirac_{\pi}(M\times N)\otimes_{\Cc(M)}\Dirac_{\pi}(M\times N)$ 
is now defined by specifying basic neighbourhoods of zero of the form
\[
K(A,B,V)=\{\tilde{u}\in\Dirac_{\pi}(M\times N)^{\otimes 2}\,|\,(F\otimes G)(\tilde{u})\in V,\text{ for}\,F\in A,\,G\in B\},
\]
where $A,B\subset\C(M\times N)\cong\Dirac_{\pi}(M\times N)'$ are bounded subsets and $V$ is a neighbourhood of zero in $\Cc(M)$.

\begin{prop}\label{Proposition Dirac is locally convex coalgebra}
The triple $(\Dirac_{\pi}(M\times N),\cm,\cu)$ is a cocommutative, locally
convex coalgebra over $\Cc(M)$, in the sense that $\cm$ and $\cu$ are continuous maps.
\end{prop}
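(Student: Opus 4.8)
The plan is to dispose of the algebraic coalgebra axioms by transport of structure and then to concentrate on the two continuity assertions, which carry the genuine content. Since $\Psi_{M\times N}$ is an isomorphism of $\Cc(M)$-modules, so is $\Psi_{M\times N}\otimes\Psi_{M\times N}$ on the tensor products over $\Cc(M)$; and by construction the maps $\cm$ and $\cu$ on $\Dirac_{\pi}(M\times N)$ are exactly the $\Psi_{M\times N}$-conjugates of the comultiplication and counit of the cocommutative sheaf coalgebra $\Cc(M\times\Nd)$ of Example \ref{Example sheaf coalgebras}. Coassociativity, the counit identities and cocommutativity therefore transfer immediately, and it remains only to show that $\cm$ and $\cu$ are continuous.

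For the counit I would observe that it is simply evaluation against the constant function. Writing $\mathbf{1}\in\C(M\times N)$ for the function identically equal to $1$, the defining formula $\llbracket E_{y},f\rrbracket(\mathbf{1})(x)=f(x)$ yields $\cu(u)=u(\mathbf{1})=\hat{\mathbf{1}}(u)$ for every $u\in\Dirac_{\pi}(M\times N)$. By Theorem \ref{Strong dual of distributions of Dirac type} the functional $\hat{\mathbf{1}}$ belongs to $\Dirac_{\pi}(M\times N)'=\Hom_{\Cc(M)}(\Dirac_{\pi}(M\times N),\Cc(M))$, so it is continuous, and hence $\cu$ is continuous.

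The substantive step is the continuity of $\cm$ with respect to the $\Cc(M)$-injective topology on $\Dirac_{\pi}(M\times N)^{\otimes 2}$. The key computation is that for all $F,G\in\C(M\times N)$ the composite $(F\otimes G)\com\cm$ equals $\widehat{FG}$, evaluation against the pointwise product. This is where the cancellation $f_{i}1_{f_{i}}=f_{i}$ enters: applying the defining formula for $\cm$ one finds
\[
(F\otimes G)(\cm(u))(x)=\sum_{i}f_{i}(x)\,1_{f_{i}}(x)\,F(x,y_{i})G(x,y_{i})=\sum_{i}f_{i}(x)(FG)(x,y_{i})=u(FG)(x).
\]
Consequently, for any basic neighbourhood $K(A,B,V)$ of zero in $\Dirac_{\pi}(M\times N)^{\otimes 2}$ we obtain
\[
\cm^{-1}(K(A,B,V))=\{u\mid u(FG)\in V\text{ for all }F\in A,\ G\in B\}=K(A\cdot B,V)\cap\Dirac_{\pi}(M\times N),
\]
where $A\cdot B=\{FG\mid F\in A,\ G\in B\}$. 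Since $\C(M\times N)$ is a Fr\'{e}chet algebra with continuous multiplication, the product $A\cdot B$ of the bounded sets $A$ and $B$ is again bounded, so $K(A\cdot B,V)$ is a basic neighbourhood of zero for the strong topology on $\cE'_{\pi}(M\times N)$. Its trace on $\Dirac_{\pi}(M\times N)$ is therefore a neighbourhood of zero, and continuity of $\cm$ follows.

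I expect the main obstacle to be essentially bookkeeping rather than conceptual. The one point requiring care is the identity $(F\otimes G)\com\cm=\widehat{FG}$, since it is the collapse of the two tensor legs---forced by $f_{i}1_{f_{i}}=f_{i}$---that converts a neighbourhood condition in the injective tensor topology into an ordinary strong neighbourhood condition against the product set $A\cdot B$; and one must invoke continuity of multiplication in $\C(M\times N)$ to know that $A\cdot B$ remains bounded. Once these two facts are secured, the continuity of both structure maps is immediate.
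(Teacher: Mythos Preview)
Your proposal is correct and follows essentially the same argument as the paper: continuity of $\cu$ via $\cu=\hat{\mathbf{1}}$, and continuity of $\cm$ via the identity $(F\otimes G)\com\cm=\widehat{FG}$ together with boundedness of $A\cdot B$. The paper phrases the last step as $\cm(K(A\cdot B,V))\subset K(A,B,V)$ rather than computing the full preimage, but this is the same idea.
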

\begin{proof} 
Let us denote by $1$ the unit of the algebra $\C(M\times N)$. We then have $\cu=\hat{1}$, which shows that $\cu$
is a continuous map. 

To see that $\cm$ is continuous, we choose any basic neighbourhood of zero in $\Dirac_{\pi}(M\times N)\otimes_{\Cc(M)}\Dirac_{\pi}(M\times N)$
of the form $K(A,B,V)$ as above. The set 
\[
A\cdot B=\{FG\,|\,F\in A,\,G\in B\}
\] 
is then a bounded subset of $\C(M\times N)$. For any 
$u=\sum_{i=1}^{n}\llbracket E_{y_{i}},f_{i}\rrbracket\in K(A\cdot B,V)$ any $F\in A$ and any $G\in B$ we now have
\begin{align*}
(F\otimes G)(\cm(u))(x)&=(F\otimes G)(\sum_{i=1}^{n}\llbracket E_{y_{i}},f_{i}\rrbracket\otimes\llbracket E_{y_{i}},1_{f_{i}}\rrbracket)(x), \\
&=\sum_{i=1}^{n}f_{i}(x)1_{f_{i}}(x)F(x,y_{i})G(x,y_{i}), \\
&=u(FG)(x).
\end{align*}
This implies that $\cm(K(A\cdot B,V))\subset K(A,B,V)$ hence $\cm$ is continuous.
\end{proof}

Since $\Dirac_{\pi}(M\times N)$ is a cocommutative, counital coalgebra over $\Cc(M)$, its dual $\Dirac_{\pi}(M\times N)'$ naturally becomes 
a commutative algebra with unit $\cu$ over $\Cc(M)$, if we define
\[
(\phi\cdot\psi)(u)=(\phi\otimes\psi)(\cm(u))
\]
for $\phi,\psi\in\Dirac_{\pi}(M\times N)'$ and $u\in\Dirac_{\pi}(M\times N)$. Continuity of $\phi\cdot\psi$
follows from continuity of $\phi\otimes\psi$ and $\cm$. 

On both $\Dirac_{\pi}(M\times N)$ and $\Dirac_{\pi}(M\times N)'$ we can naturally define conjugation as follows.
For any $u\in\Dirac_{\pi}(M\times N)$ we define conjugation by
\[
u=\sum_{i=1}^{n}\llbracket E_{y_{i}},f_{i}\rrbracket\Longrightarrow \overline{u}=\sum_{i=1}^{n}\llbracket E_{y_{i}},\overline{f_{i}}\rrbracket.
\]
Using the above formula and complex conjugation on $\Cc(M)$ we now define for any $\phi\in\Dirac_{\pi}(M\times N)'$ the element 
$\overline{\phi}\in\Dirac_{\pi}(M\times N)'$ by
\[
\overline{\phi}(u)=\overline{\phi(\overline{u})}
\]
for $u\in\Dirac_{\pi}(M\times N)$. It is now a straightforward calculation to extend the Theorem \ref{Strong dual of distributions of Dirac type}
in the following way.

\begin{prop}\label{Isomorphism of locally convex algebras}
Let $M\times N$ be a trivial bundle over $M$ with fiber $N$ and bundle projection $\pi:M\times N\to M$. 
The map $\hat{}:\C(M\times N)\to\Dirac_{\pi}(M\times N)'$ is an isomorphism of locally convex algebras with involutions.
\end{prop}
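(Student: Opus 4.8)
The plan is to build directly on Theorem~\ref{Strong dual of distributions of Dirac type}, which already identifies $\hat{}$ as an isomorphism of locally convex $\Cc(M)$-modules. All of the topology and $\Cc(M)$-linearity is therefore in hand, so $\hat{}$ is in particular a continuous linear bijection with continuous inverse; what remains is purely algebraic, namely to verify that $\hat{}$ transports the pointwise multiplication, the unit and the conjugation of $\C(M\times N)$ to the product, the unit $\cu$ and the conjugation on $\Dirac_{\pi}(M\times N)'$. Establishing these three compatibilities upgrades the module isomorphism to an isomorphism of locally convex algebras with involution.

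First I would check multiplicativity on the spanning elements $u=\sum_{i=1}^{n}\llbracket E_{y_{i}},f_{i}\rrbracket$. The product on the dual is defined by $(\phi\cdot\psi)(u)=(\phi\ten\psi)(\cm(u))$, and under $\hat{}$ the functional $\hat{F}\ten\hat{G}$ agrees with the map $F\ten G$ of the text; so using the explicit formula for $\cm$ together with $1_{f_{i}}f_{i}=f_{i}$ one obtains
\[
(\hat{F}\cdot\hat{G})(u)(x)=\sum_{i=1}^{n}f_{i}(x)1_{f_{i}}(x)F(x,y_{i})G(x,y_{i})=\sum_{i=1}^{n}f_{i}(x)(FG)(x,y_{i})=u(FG)(x)=\widehat{FG}(u)(x).
\]
This is precisely the identity $(F\ten G)(\cm(u))=u(FG)$ already recorded in the proof of Proposition~\ref{Proposition Dirac is locally convex coalgebra}; indeed the comultiplication was engineered so that dual pairing against $\cm$ reproduces evaluation at the pointwise product. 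Compatibility with the unit is immediate from $\hat{1}(u)=u(1)=\sum_{i}f_{i}=\cu(u)$, so $\hat{1}=\cu$. For the involution one unwinds the definitions: with $\overline{u}=\sum_{i=1}^{n}\llbracket E_{y_{i}},\overline{f_{i}}\rrbracket$ one computes
\[
\overline{\hat{F}}(u)(x)=\overline{\hat{F}(\overline{u})(x)}=\overline{\sum_{i=1}^{n}\overline{f_{i}}(x)F(x,y_{i})}=\sum_{i=1}^{n}f_{i}(x)\overline{F}(x,y_{i})=u(\overline{F})(x)=\widehat{\overline{F}}(u)(x),
\]
whence $\overline{\hat{F}}=\widehat{\overline{F}}$.

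I do not anticipate a genuine obstacle, since the module isomorphism and the continuity of the product on $\Dirac_{\pi}(M\times N)'$ were settled earlier and the substance reduces to the three short computations above, each collapsing through the relation $1_{f}f=f$. The only point meriting a word of care is that these identities are verified on the generators $\llbracket E_{y},f\rrbracket$ and must then be extended to all of $\C(M\times N)$ and its dual; this is routine, since both sides of each identity are additive (respectively conjugate-linear) in $u$ and the elements $\llbracket E_{y},f\rrbracket$ span $\Dirac_{\pi}(M\times N)$.
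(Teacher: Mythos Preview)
Your proposal is correct and matches the paper's approach: the paper does not give a proof at all, merely stating that ``it is now a straightforward calculation to extend Theorem~\ref{Strong dual of distributions of Dirac type}'', and your three verifications (multiplicativity via the identity $(F\ten G)(\cm(u))=u(FG)$ from Proposition~\ref{Proposition Dirac is locally convex coalgebra}, $\hat{1}=\cu$, and $\overline{\hat{F}}=\widehat{\overline{F}}$) are precisely that calculation spelled out.
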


Using the definitions and notations from the Subsection \ref{Subsection Coalgebras} we now define for any $x\in M$ 
the local $\Cc(M)_{x}$-coalgebra
\[
\Dirac_{\pi}(M\times N)_{x}=\Dirac_{\pi}(M\times N)/I_{x}\Dirac_{\pi}(M\times N).
\]
It follows from \cite{Mrc07_2} that the space
$\Dirac_{\pi}(M\times N)_{x}$ is a free $\Cc(M)_{x}$-module, generated by the set $G(\Dirac_{\pi}(M\times N)_{x})$ of grouplike elements.
The spectral sheaf of the $\Cc(M)$-coalgebra $\Dirac_{\pi}(M\times N)$ is the sheaf
\[
\pisp:\Esp(\Dirac_{\pi}(M\times N))\to M
\]
with the stalk at the point $x\in M$ given by
\[
\Esp(\Dirac_{\pi}(M\times N))_{x}=G(\Dirac_{\pi}(M\times N)_{x}).
\]
Note that the sheaves $M\times\Nd$ and $\Esp(\Dirac_{\pi}(M\times N))$ over $M$ are isomorphic via the map
\[
(x,y)\mapsto\llbracket E_{y},f\rrbracket|_{x},
\]
where $f\in\Cc(M)$ is any function with $f|_{x}=1\in\Cc(M)_{x}$.

Let us now define the real part of $\Dirac_{\pi}(M\times N)'$ by
\[
\Dirac_{\pi}(M\times N)'_{\RR}=\{\phi\in\Dirac_{\pi}(M\times N)'\,|\,\overline{\phi}=\phi\}
\]
and note that it corresponds to the algebra $\C(M\times N,\RR)$ via 
the isomorphism from Proposition \ref{Isomorphism of locally convex algebras}. 
This implies that $\Dirac_{\pi}(M\times N)'_{\RR}$ satisfies the
conditions of the main theorem in \cite{MiVa96}, so it can be used to define a smooth structure on the space
$\Spec(\Dirac_{\pi}(M\times N)'_{\RR})$. Furthermore, we have a natural bijection
\[
\Theta_{M\times N}:\Esp(\Dirac_{\pi}(M\times N))\to\Spec(\Dirac_{\pi}(M\times N)'_{\RR}),
\]
defined by
\[
\Theta_{M\times N}(\llbracket E_{y},f\rrbracket|_{x})(\phi)=\phi(\llbracket E_{y},f\rrbracket)(x)
\]
for $\phi\in\Dirac_{\pi}(M\times N)'_{\RR}$. We will now use this bijection to transfer the smooth structure
from $\Spec(\Dirac_{\pi}(M\times N)'_{\RR})$ to $\Esp(\Dirac_{\pi}(M\times N))$.

\begin{dfn}
Let $\pi:M\times N\to M$ be a trivial bundle over $M$ with fiber $N$.
\textbf{The spectral bundle} 
\[
\Bsp(\Dirac_{\pi}(M\times N))
\] 
of the coalgebra $\Dirac_{\pi}(M\times N)$ is the set $\Esp(\Dirac_{\pi}(M\times N))$, equipped with 
the bundle projection $\pisp:\Bsp(\Dirac_{\pi}(M\times N))\to M$ and the topology and smooth structure 
such that $\Theta_{M\times N}$ is a diffeomorphism.
\end{dfn}

We will show in the next theorem that $\Bsp(\Dirac_{\pi}(M\times N))$ is a trivial bundle over $M$, naturally isomorphic to 
the bundle $M\times N$. Define a map
\[
\Phi^{\text{bun}}_{M\times N}:M\times N\to\Bsp(\Dirac_{\pi}(M\times N)),
\] 
by
\[
\Phi^{\text{bun}}_{M\times N}(x,y)=\llbracket E_{y},f\rrbracket|_{x}.
\]

\begin{theo}\label{Theorem Isomorphism of trivial bundles}
Let $\pi:M\times N\to M$ be a trivial bundle over $M$ with fiber $N$.
The map $\Phi^{\text{bun}}_{M\times N}:M\times N\to\Bsp(\Dirac_{\pi}(M\times N))$  
is an isomorphism of trivial bundles over $M$.
\end{theo}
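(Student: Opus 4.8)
The plan is to show that $\Phi^{\text{bun}}_{M\times N}$ is at once a bundle map over $M$, a bijection, and a diffeomorphism, the last point carrying essentially all the content. First I would verify that the formula is well posed: if $f,f'\in\Cc(M)$ both have germ $1$ at $x$, then $f-f'\in I_{x}$, so $\llbracket E_{y},f\rrbracket-\llbracket E_{y},f'\rrbracket=\llbracket E_{y},f-f'\rrbracket\in I_{x}\Dirac_{\pi}(M\times N)$ and the two elements induce the same class in the quotient $\Dirac_{\pi}(M\times N)_{x}$; moreover this class is grouplike, hence lies in $\Esp(\Dirac_{\pi}(M\times N))_{x}$. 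Since the germ is taken at $x$, we have $\pisp\com\Phi^{\text{bun}}_{M\times N}=\pi$, so $\Phi^{\text{bun}}_{M\times N}$ is a bundle map over $M$. Bijectivity is already recorded in the excerpt: as a set map $\Phi^{\text{bun}}_{M\times N}$ coincides with the stated isomorphism of sheaves $M\times\Nd\cong\Esp(\Dirac_{\pi}(M\times N))$, $(x,y)\mapsto\llbracket E_{y},f\rrbracket|_{x}$.

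It remains to prove that $\Phi^{\text{bun}}_{M\times N}$ is a diffeomorphism. Because the smooth structure on $\Bsp(\Dirac_{\pi}(M\times N))$ is defined precisely so that $\Theta_{M\times N}$ is a diffeomorphism, it is enough to show that the composite $\Theta_{M\times N}\com\Phi^{\text{bun}}_{M\times N}\colon M\times N\to\Spec(\Dirac_{\pi}(M\times N)'_{\RR})$ is a diffeomorphism. To identify this composite I would invoke Proposition \ref{Isomorphism of locally convex algebras}, which realizes $\Dirac_{\pi}(M\times N)'_{\RR}$ as $\C(M\times N,\RR)$ through $\hat{}\,$. For $\phi=\hat{F}$ with $F\in\C(M\times N,\RR)$ and any $f$ with germ $1$ at $x$, so that $f(x)=1$, we get
\[
\Theta_{M\times N}(\llbracket E_{y},f\rrbracket|_{x})(\hat{F})=\hat{F}(\llbracket E_{y},f\rrbracket)(x)=\llbracket E_{y},f\rrbracket(F)(x)=f(x)F(x,y)=F(x,y).
\]
Thus, under the identification $\hat{}\,$, the character $\Theta_{M\times N}(\Phi^{\text{bun}}_{M\times N}(x,y))$ is precisely evaluation at $(x,y)$, i.e.\ it corresponds to the Dirac character $\delta_{(x,y)}$ on $\C(M\times N,\RR)$.

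Consequently, writing $\hat{}^{*}\colon\Spec(\Dirac_{\pi}(M\times N)'_{\RR})\to\Spec(\C(M\times N,\RR))$ for the diffeomorphism of real spectra induced by the algebra isomorphism $\hat{}\,$, we obtain $\hat{}^{*}\com\Theta_{M\times N}\com\Phi^{\text{bun}}_{M\times N}=\Phi^{\text{man}}_{M\times N}$, the map $(x,y)\mapsto\delta_{(x,y)}$. By the result recalled in the subsection on real commutative algebras (through \cite{MiVa96}), $\Phi^{\text{man}}_{M\times N}$ is a diffeomorphism. Since $\hat{}^{*}$ and $\Theta_{M\times N}$ are diffeomorphisms, it follows that $\Phi^{\text{bun}}_{M\times N}$ is a diffeomorphism, and combined with the first paragraph this makes it an isomorphism of trivial bundles over $M$. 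The main obstacle is not a single hard estimate but the bookkeeping of this last paragraph: one must check that the smooth structure transported to $\Bsp(\Dirac_{\pi}(M\times N))$ via $\Theta_{M\times N}$ and the smooth structure on $\Spec(\C(M\times N,\RR))$ furnished by \cite{MiVa96} are matched by the algebra isomorphism $\hat{}\,$, so that all three maps in the composite are genuine diffeomorphisms for mutually compatible differentiable structures.
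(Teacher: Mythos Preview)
Your argument is correct and is essentially the paper's own proof: the paper sets up the same commutative square with $\Phi^{\text{man}}_{M\times N}$, $\Theta_{M\times N}$, and the diffeomorphism of spectra induced by $\hat{}\,$ (there called $\Sp$, the inverse of your $\hat{}^{*}$), and concludes that $\Phi^{\text{bun}}_{M\times N}$ is a diffeomorphism because the other three maps are. You are a bit more explicit than the paper in verifying well-definedness and in computing $\Theta_{M\times N}(\llbracket E_{y},f\rrbracket|_{x})(\hat{F})=F(x,y)$ to check commutativity of the square, which the paper leaves to the reader.
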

\begin{proof}
Let us denote by $\Sp:\Spec(\C(M\times N,\RR))\to\Spec(\Dirac_{\pi}(M\times N)'_{\RR})$ the diffeomorphism, induced
by the inverse of $\hat{}:\C(M\times N,\RR)\to\Dirac_{\pi}(M\times N)'_{\RR}$. We then have the commutative diagram
\[
\begin{CD}
M\times N @>\Phi^{\text{bun}}_{M\times N}>>\Bsp(\Dirac_{\pi}(M\times N))  \\
@V\Phi^{\text{man}}_{M\times N}VV @VV\Theta_{M\times N}V \\
\Spec(\C(M\times N,\RR)) @>\Sp>>\Spec(\Dirac_{\pi}(M\times N)'_{\RR})
\end{CD}
\]
Since $\Phi^{\text{man}}_{M\times N}$, $\Theta_{M\times N}$ and $\Sp$ are diffeomorphisms, $\Phi^{\text{bun}}_{M\times N}$
is a diffeomorphism as well.
The equality $\pisp\com\Phi^{\text{bun}}_{M\times N}=\pi$ follows from the equality $\pisp\com\Phi^{\text{shv}}_{M\times\Nd}=\pi$.
\end{proof}

Let us now take a look at this construction in the case of a single manifold.

\begin{ex} \rm \label{Example Dirac(N)}
Let $M$ be a single point and consider the manifold $N$ as a trivial bundle over a point. In this case we have
\[
\Dirac_{\pi}(M\times N)=\Dirac(N)=\text{Span}\{\delta_{y}\,|\,y\in N\}.
\]
Every element $u\in\Dirac(N)$ can be expressed as a finite sum $u=\sum_{i=1}^{n}\lambda_{i}\delta_{y_{i}}$
for unique $\lambda_{1},\ldots,\lambda_{n}\in\CC$ and $y_{1},\ldots,y_{n}\in N$. The space $\Dirac(N)$ is 
a coalgebra over $\CC$ with structure maps:
\begin{align*}
\cm(\sum_{i=1}^{n}\lambda_{i}\delta_{y_{i}})&=\sum_{i=1}^{n}\lambda_{i}\delta_{y_{i}}\otimes\delta_{y_{i}}, \\
\cu(\sum_{i=1}^{n}\lambda_{i}\delta_{y_{i}})&=\sum_{i=1}^{n}\lambda_{i}.
\end{align*}
Grouplike elements of $\Dirac(N)$ are precisely Dirac distributions, so we have 
\[
G(\Dirac(N))=\{\delta_{y}\,|\,y\in N\}.
\]
The spectral sheaf $\Esp(\Dirac(N))$ is the set $G(\Dirac(N))$ with the discrete topology and projection
onto the point. The map $\Theta_{N}:\Esp(\Dirac(N))\to\Spec(\Dirac(N)'_{\RR})$ is defined by
$
\Theta_{N}(\delta_{y})(\phi)=\phi(\delta_{y}),
$
which means that $\Theta_{N}(\delta_{y})=\hat{\delta}_{y}$. The topology on $\Bsp(\Dirac(N))=G(\Dirac(N))$, which is induced
by $\Theta_{N}$, coincides with the subspace topology on $G(\Dirac(N))$, induced from $\cE'(N)$.
Finally, the diffeomorphism
\[
\Phi^{\text{bun}}_{N}:N\to\Bsp(\Dirac(N))
\] 
is given by $\Phi^{\text{bun}}_{N}(y)=\delta_{y}$. 
\end{ex}

\section{Locally convex bialgebroid of an action Lie groupoid}\label{Section Locally convex bialgebroid}

In this section we will assign to each action groupoid $M\rtimes H$ 
a locally convex bialgebroid with antipode $\Dirac(M\rtimes H)$ over $\Cc(M)$, from which 
the Lie groupoid $M\rtimes H$ can be reconstructed.

Let $M$ be a second-countable manifold and let $H$ be a second-countable Lie group, which acts on $M$
from the right. If we denote by $\Hd$ the group $H$ with the discrete topology, the group $\Hd$ acts on $M$
from the right as well, so we obtain two action groupoids $M\rtimes H$ and $M\rtimes\Hd$. 
These two groupoids are isomorphic as groupoids but not as Lie groupoids if $\dim(H)>0$. 

Groupoid $M\rtimes\Hd$ is \'{e}tale, so we can construct its Hopf algebroid
\[
\Cc(M\rtimes\Hd)=\bigoplus_{h\in\Hd}\Cc(M\times\{h\}).
\]
Moreover, since $M\rtimes H$ is a Lie groupoid, we also have a convolution product, as defined in \cite{LeMaVa17}, on the space
\[
\cE'_{\trg}(M\rtimes H)=\Hom_{\Cc(M)}(\C(M\times H),\Cc(M))
\]
of $\trg$-transversal distributions on $M\rtimes H$. It can be described explicitly as follows. 
The left translation by $g\in M\rtimes H$ is
the diffeomorphism $\Lt_{g}:\trg^{-1}(\src(g))\to\trg^{-1}(\trg(g))$, defined by $\Lt_{g}(h)=gh$.
For any $F\in\C(M\times H)$ and any $g\in M\rtimes H$ it follows that 
$F\com \Lt_{g}\in\C(\trg^{-1}(\src(g)))$ and one can show that the function
$M\times H\to\RR$, $g\mapsto T_{\src(g)}(F\com \Lt_{g})$,
is smooth for any $T\in\cE'_{\trg}(M\rtimes H)$. For any $T',T''\in\cE'_{\trg}(M\rtimes H)$
the convolution $T'\ast T''\in \cE'_{\trg}(M\rtimes H)$ is then defined by
\[
(T'\ast T'')(F)(x)=
  T'\left(g\mapsto T''_{\src(g)}(F\com \Lt_{g})\right)(x),
\]
for any $F\in\C(M\times H)$ and any $x\in M$.

Using the notation from the Example \ref{Example sheaf coalgebras} we define an injective 
$\Cc(M)$-linear map $\Psi_{M\rtimes H}:\Cc(M\rtimes\Hd)\to\cE'_{\trg}(M\rtimes H)$ by
\[
\Psi_{M\rtimes H}\big(\sum_{i=1}^{n}f_{i}\cdot\delta_{h_{i}}\big)=\sum_{i=1}^{n}\llbracket E_{h_{i}},f_{i}\rrbracket.
\]

\begin{prop}
The map $\Psi_{M\rtimes H}:\Cc(M\rtimes\Hd)\to\cE'_{\trg}(M\rtimes H)$ is multiplicative.
\end{prop}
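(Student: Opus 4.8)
The plan is to exploit the linearity of $\Psi_{M\rtimes H}$ and the bilinearity of both convolution products to reduce the claim to the generators. Writing an arbitrary element of $\Cc(M\rtimes\Hd)$ as a finite sum $\sum_i f_i\cdot\delta_{h_i}$, it suffices to verify
\[
\Psi_{M\rtimes H}\big((f_1\cdot\delta_{g_1})*(f_2\cdot\delta_{g_2})\big)=\Psi_{M\rtimes H}(f_1\cdot\delta_{g_1})*\Psi_{M\rtimes H}(f_2\cdot\delta_{g_2})
\]
for $f_1,f_2\in\Cc(M)$ and $g_1,g_2\in H$. By the Example \ref{Example Locally grouplike Hopf algebroid} the left-hand side equals $\llbracket E_{g_1g_2},\,f_1\,(g_1f_2)\rrbracket$, whose action on $F\in\C(M\times H)$ is $x\mapsto f_1(x)f_2(xg_1)F(x,g_1g_2)$, since $(g_1f_2)(x)=f_2(xg_1)$. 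The whole proof thus reduces to computing the distributional convolution $\llbracket E_{g_1},f_1\rrbracket*\llbracket E_{g_2},f_2\rrbracket$ and checking that it produces the same function.

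For this computation the essential input is the explicit description of the left translation together with the pointwise formula $\llbracket E_h,f\rrbracket_x=f(x)\delta_h$. For an arrow $(x_0,h_0)\in M\rtimes H$ one has $\src(x_0,h_0)=x_0h_0$ and $\Lt_{(x_0,h_0)}(x_0h_0,h')=(x_0,h_0h')$, so for $F\in\C(M\times H)$ the restriction $F\com\Lt_{(x_0,h_0)}$ is, as a function on the fibre $\trg^{-1}(x_0h_0)\cong H$, the map $h'\mapsto F(x_0,h_0h')$. Feeding this to $T''=\llbracket E_{g_2},f_2\rrbracket$, whose value over the source point is $T''_{x_0h_0}=f_2(x_0h_0)\delta_{g_2}$, the Dirac distribution selects $h'=g_2$ and yields the smooth function
\[
(x_0,h_0)\longmapsto f_2(x_0h_0)\,F(x_0,h_0g_2).
\]
Applying $T'=\llbracket E_{g_1},f_1\rrbracket$ to this function, i.e. evaluating the fibre coordinate at $h_0=g_1$ with weight $f_1$, gives $x\mapsto f_1(x)f_2(xg_1)F(x,g_1g_2)$, which agrees with the left-hand side above.

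The step requiring the most care is the bookkeeping of the two fibrewise identifications in the convolution formula: keeping straight that the inner distribution $T''$ is evaluated in the fibre over $\src(g)=x_0h_0$, so that its weight $f_2$ is sampled there, while the left translation simultaneously reparametrises that fibre by $h'\mapsto h_0h'$. It is precisely the appearance of $f_2(xg_1)$, rather than $f_2(x)$, that reproduces the twist $g_1f_2$ built into the convolution on $\Cc(M\rtimes\Hd)$; matching this twist is the conceptual heart of the identity, the remaining manipulations being routine evaluations of Dirac distributions.
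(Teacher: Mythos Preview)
Your proof is correct and follows essentially the same route as the paper: reduce to generators by bilinearity, compute the distributional convolution $\llbracket E_{g_1},f_1\rrbracket*\llbracket E_{g_2},f_2\rrbracket$ via the left-translation formula to obtain $f_1(x)f_2(xg_1)F(x,g_1g_2)$, and match it against $\Psi_{M\rtimes H}$ applied to $(f_1\cdot\delta_{g_1})*(f_2\cdot\delta_{g_2})=(f_1(g_1f_2))\cdot\delta_{g_1g_2}$ from Example~\ref{Example Locally grouplike Hopf algebroid}. Your discussion of the fibre bookkeeping is slightly more explicit than the paper's, but the argument is the same.
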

\begin{proof}
Let us first prove that $\Psi_{M\rtimes H}$ is multiplicative on the set of basis elements. Choose $f_{1},f_{2}\in\Cc(M)$,
$h_{1},h_{2}\in H$ and denote $a_{1}=f_{1}\cdot\delta_{h_{1}}$ respectively $a_{2}=f_{2}\cdot\delta_{h_{2}}$.
We then have:
\begin{align*}
(\Psi_{M\rtimes H}(a_{1})\ast\Psi_{M\rtimes H}(a_{2}))(F)(x)
&=\Psi_{M\rtimes H}(a_{1})\left((x,h)\mapsto \llbracket E_{h_{2}},f_{2}\rrbracket_{xh}(F\com\Lt_{(x,h)})\right)(x), \\
&=\llbracket E_{h_{1}},f_{1}\rrbracket\left((x,h)\mapsto f_{2}(xh)F(x,hh_{2})\right)(x), \\
&=f_{1}(x)f_{2}(xh_{1})F(x,h_{1}h_{2}).
\end{align*}
On the other hand (see Example \ref{Example Locally grouplike Hopf algebroid}) we have $a_{1}*a_{2}=(f_{1}(h_{1}f_{2}))\cdot\delta_{h_{1}h_{2}}$
hence
\[
\Psi_{M\rtimes H}(a_{1}*a_{2})(F)(x)=\llbracket E_{h_{1}h_{2}},f_{1}(h_{1}f_{2})\rrbracket(F)(x)
=f_{1}(x)f_{2}(xh_{1})F(x,h_{1}h_{2}).
\]
Multiplicativity of the map $\Psi_{M\rtimes H}$ now follows from linearity of $\Psi_{M\rtimes H}$ and bilinearity of both
convolution products.
\end{proof}

\begin{dfn}
Let $M\rtimes H$ be an action groupoid of an action of a second-countable Lie group $H$ on a second-countable
manifold $M$ and let $M\rtimes\Hd$ be the assoicated \'{e}tale groupoid. The \textbf{Dirac bialgebroid} of $M\rtimes H$ is the space
\[
\Dirac(M\rtimes H)=\Psi_{M\rtimes H}(\Cc(M\rtimes\Hd)).
\]
\end{dfn}

The Dirac bialgebroid $\Dirac(M\rtimes H)$ inherits from $\Cc(M\rtimes\Hd)$ a structure of a locally grouplike Hopf algebroid over $\Cc(M)$.
Moreover, by Proposition \ref{Proposition Dirac is locally convex coalgebra} we obtain on $\Dirac(M\rtimes H)$ a structure of 
a locally convex coalgebra. Finally, as shown in \cite{LeMaVa17}, the multiplication on $\cE'_{\trg}(M\rtimes H)$
and hence on $\Dirac(M\rtimes H)$ is separately continuous. We sum up these observations in the following proposition.

\begin{prop}\label{Proposition Dirac is locally convex bialgebroid}
The Dirac bialgebroid $\Dirac(M\rtimes H)$ of any action Lie groupoid $M\rtimes H$ is a locally convex bialgebroid with an antipode over 
$\Cc(M)$. 
\end{prop}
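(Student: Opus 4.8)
The plan is to obtain all of the algebraic structure by transport along the map $\Psi_{M\rtimes H}$ and then to read off the three continuity statements from results already in place. First I would argue that $\Psi_{M\rtimes H}\colon\Cc(M\rtimes\Hd)\to\Dirac(M\rtimes H)$ is an isomorphism of Hopf algebroids onto its image. By construction it is an injective $\Cc(M)$-linear map whose image is $\Dirac(M\rtimes H)$, and the preceding proposition shows it is multiplicative for the two convolution products. It remains to check that it intertwines the remaining structure maps: comparing $\cm(f\cdot\delta_h)=(f\cdot\delta_h)\ten(1_f\cdot\delta_h)$ from Example \ref{Example sheaf coalgebras} with the comultiplication $\cm(\llbracket E_h,f\rrbracket)=\llbracket E_h,f\rrbracket\ten\llbracket E_h,1_f\rrbracket$ on $\Dirac_\pi(M\times H)$, and the counits $\cu(f\cdot\delta_h)=f=\cu(\llbracket E_h,f\rrbracket)$, shows that $\Psi_{M\rtimes H}$ carries $\cm,\cu$ to $\cm,\cu$; likewise $S(f\cdot\delta_h)=(h^{-1}f)\cdot\delta_{h^{-1}}$ from Example \ref{Example Locally grouplike Hopf algebroid} is carried to $\llbracket E_{h^{-1}},h^{-1}f\rrbracket$. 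Since these identities hold on generators and every map involved is $\Cc(M)$-linear, they hold throughout.

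Because $\Cc(M\rtimes\Hd)$ is a locally grouplike Hopf algebroid over $\Cc(M)$ by Example \ref{Example Locally grouplike Hopf algebroid}, transport of structure along the isomorphism $\Psi_{M\rtimes H}$ immediately yields that $\Dirac(M\rtimes H)$ is a bialgebroid with an antipode over $\Cc(M)$: conditions (i)--(iii) of the bialgebroid and the two antipode axioms are preserved because $\Psi_{M\rtimes H}$ respects the algebra product, the extension $\Cc(M)\subset\Dirac(M\rtimes H)$ (the unit section corresponding to Diracs along $E_e$), the comultiplication, the counit and the antipode. In particular $\Dirac(M\rtimes H)$ inherits the locally grouplike property as well.

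For the topological claims I would proceed in two steps. The underlying $\Cc(M)$-coalgebra of $\Dirac(M\rtimes H)$ is exactly $\Dirac_\pi(M\times N)$ for $N=H$, with the very structure maps $\cm,\cu$ appearing above; hence Proposition \ref{Proposition Dirac is locally convex coalgebra} gives at once that $\cm$ and $\cu$ are continuous, so the coalgebra is locally convex. For the multiplication I would invoke \cite{LeMaVa17}, where the convolution on $\cE'_{\trg}(M\rtimes H)$ is shown to be separately continuous for the strong topology; since $\Dirac(M\rtimes H)$ is a subalgebra of $\cE'_{\trg}(M\rtimes H)$ carrying the induced topology and the convolution restricts to it (again by the multiplicativity of $\Psi_{M\rtimes H}$), its multiplication is separately continuous too. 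Combining the continuous $\cm,\cu$ with the separately continuous convolution and the algebraically transported antipode shows that $\Dirac(M\rtimes H)$ is a locally convex bialgebroid with an antipode.

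The one point needing genuine care -- and the step I expect to be the main obstacle -- is verifying that the structure transported from the étale Hopf algebroid $\Cc(M\rtimes\Hd)$ coincides with the structure compatible with the topology induced from $\cE'_{\trg}(M\rtimes H)$. The coalgebra maps $\cm,\cu$ used for continuity come from $\Dirac_\pi(M\times H)$ in Section \ref{Section Spectral bundle of coalgebra of Dirac distributions}, whereas the product making $\Dirac(M\rtimes H)$ a Hopf algebroid is the étale convolution; the identification of the latter with the restriction of the $\cE'_{\trg}$-convolution is exactly the content of the preceding multiplicativity proposition, and it is this matching of the two a priori different products, together with the coincidence of the two comultiplications on generators, that lets the separate-continuity result of \cite{LeMaVa17} and Proposition \ref{Proposition Dirac is locally convex coalgebra} be applied to one and the same object. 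I note that if one additionally wanted the antipode to be continuous this would \emph{not} follow by transport, since the $LF$-topology on $\Cc(M\rtimes\Hd)$ and the induced topology on $\Dirac(M\rtimes H)$ differ when $\dim H>0$; it would instead require writing $S$ as the fibrewise group inversion $\llbracket E_h,f\rrbracket\mapsto\llbracket E_{h^{-1}},f\rrbracket$ (the restriction of a continuous pullback on $\cE'_{\trg}$) followed by the base-point twist $f\mapsto h^{-1}f$ induced by the action, and estimating the latter directly. As the present statement asks only for a locally convex bialgebroid with an (algebraic) antipode, this extra argument is not needed, and once the identifications above are in place no further estimates arise.
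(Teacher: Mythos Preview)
Your proposal is correct and follows essentially the same approach as the paper: the paper's justification (given in the paragraph immediately preceding the proposition) also consists of transporting the Hopf algebroid structure from $\Cc(M\rtimes\Hd)$ via $\Psi_{M\rtimes H}$, invoking Proposition~\ref{Proposition Dirac is locally convex coalgebra} for the continuity of $\cm$ and $\cu$, and citing \cite{LeMaVa17} for the separate continuity of the convolution. Your write-up simply spells out more explicitly the verification that $\Psi_{M\rtimes H}$ intertwines the comultiplication, counit and antipode on generators, and correctly flags that continuity of the antipode is neither claimed nor needed here.
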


\begin{rem}\rm
A locally convex bialgebroid is a bialgebroid $(A,\cm,\cu,\mu)$, equipped with a locally convex structure such that
$\cm$ and $\cu$ are continuous maps and $\mu$ is separately continuous. We do not know if the antipode $S$
on $\Dirac(M\rtimes H)$ is continuous in general, which would mean that it is a locally convex Hopf algebroid.
\end{rem}

\begin{ex} \rm
Let us take a look at the case when the group $H$ acts trivially on $M$. The associated action
groupoid $M\rtimes H$ is in this case just the trivial bundle of Lie groups over $M$ with fiber $H$, which
will be denoted by $M\times H$. The multiplication and antipode can be expressed on generators by the formulas:
\begin{align*}
\llbracket E_{h_{1}},f_{1}\rrbracket*\llbracket E_{h_{2}},f_{2}\rrbracket&=\llbracket E_{h_{1}h_{2}},f_{1}f_{2}\rrbracket, \\
S(\llbracket E_{h},f\rrbracket)&=\llbracket E_{h^{-1}},f\rrbracket.
\end{align*}
In this case $\Cc(M)$ is a central subalgebra of $\Dirac(M\times H)$.
Moreover, from the equality $\trg\com\inv=t$ it follows that $S$ is continuous. As a result we see that $\Dirac(M\times H)$
is a locally convex Hopf algebra over $\Cc(M)$. 
\end{ex}

Now take any action Lie groupoid $M\rtimes H$. The spectral
\'{e}tale Lie groupoid $\Gsp(\Dirac(M\rtimes H))$ of $\Dirac(M\rtimes H)$ is then isomorphic to the \'{e}tale groupoid $M\rtimes\Hd$.
Moreover, since $\Dirac(M\rtimes H)$ is a locally convex coalgebra over $\Cc(M)$, we also have the 
bijection $\Theta_{M\rtimes H}:\Gsp(\Dirac(M\rtimes H))\to\Spec(\Dirac(M\rtimes H))'_{\RR})$.

\begin{dfn}
\textbf{The spectral action Lie groupoid}   
\[
\AGsp(\Dirac(M\rtimes H))
\] 
of the Dirac bialgebroid $\Dirac(M\rtimes H)$ is the groupoid $\Gsp(\Dirac(M\rtimes H))$, equipped
with the smooth structure such that the map $\Theta_{M\rtimes H}$ is a diffeomorphism.
\end{dfn}

Define a map
\[
\Phi^{\text{agr}}_{M\rtimes H}:M\rtimes H\to\AGsp(\Dirac(M\rtimes H)),
\] 
by
\[
\Phi^{\text{agr}}_{M\rtimes H}(x,h)=\llbracket E_{h},f\rrbracket|_{x},
\]
where $f\in\Cc(M)$ is such that $f|_{x}=1\in\Cc(M)_{x}$.

\begin{theo}
Let $M\rtimes H$ be an action groupoid of an action of a second-countable Lie group $H$
on a second-countable manifold $M$. The map 
\[
\Phi^{\text{agr}}_{M\rtimes H}:M\rtimes H\to\AGsp(\Dirac(M\rtimes H))
\]
is an isomorphism of Lie groupoids.
\end{theo}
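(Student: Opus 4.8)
The plan is to factor $\Phi^{\text{agr}}_{M\rtimes H}$ through the two reconstruction results already at our disposal: the purely algebraic reconstruction of an \'{e}tale groupoid from its Hopf algebroid from \cite{Mrc07_1}, which will supply the groupoid structure, and the bundle reconstruction of Theorem \ref{Theorem Isomorphism of trivial bundles}, which will supply the smooth structure. The observation that glues the two together is that, \emph{as a coalgebra over} $\Cc(M)$, the Dirac bialgebroid $\Dirac(M\rtimes H)$ is literally the coalgebra $\Dirac_{\pi}(M\times H)$ of Section \ref{Dirac distributions of constant type} for the trivial bundle $\pi=\trg:M\times H\to M$. Consequently the spectral sheaf, the dual algebra, and the bijection $\Theta$ all coincide with those studied there; in particular $\Theta_{M\rtimes H}$ and $\Theta_{M\times H}$ are given by the same formula on the same data, so the smooth structure that $\AGsp(\Dirac(M\rtimes H))$ carries via $\Theta_{M\rtimes H}$ is the same as the one that $\Bsp(\Dirac_{\pi}(M\times H))$ carries via $\Theta_{M\times H}$.

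First I would handle the groupoid structure. The map $\Psi_{M\rtimes H}:\Cc(M\rtimes\Hd)\to\Dirac(M\rtimes H)$ is a $\Cc(M)$-linear bijection onto $\Dirac(M\rtimes H)$, it intertwines the coalgebra structures by the very way the latter was transported, it is multiplicative by the preceding proposition, and it commutes with the antipodes; hence it is an isomorphism of Hopf algebroids over $\Cc(M)$. An isomorphism of Hopf algebroids preserves weak grouplikeness, $S$-invariance and the local coalgebras, so it induces an isomorphism of the spectral \'{e}tale groupoids $\Gsp(\Cc(M\rtimes\Hd))\to\Gsp(\Dirac(M\rtimes H))$ sending $a|_{x}$ to $\Psi_{M\rtimes H}(a)|_{x}$. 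Composing with the isomorphism $\Phi^{\text{egr}}_{M\rtimes\Hd}:M\rtimes\Hd\to\Gsp(\Cc(M\rtimes\Hd))$ of \cite{Mrc07_1} and unwinding the definitions on an arrow $(x,h)$, where a bisection of $M\rtimes\Hd$ may be taken of the form $U\times\{h\}$ and the representing element is $f\cdot\delta_{h}$, this composite sends $(x,h)$ to $\llbracket E_{h},f\rrbracket|_{x}$, which is exactly $\Phi^{\text{agr}}_{M\rtimes H}(x,h)$. Since $M\rtimes\Hd$ and $M\rtimes H$ share the same underlying abstract groupoid, it follows that $\Phi^{\text{agr}}_{M\rtimes H}$ is an isomorphism of groupoids over $M$.

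Next I would handle the smooth structure. For an action groupoid every $\trg$-elementary subset $U\times\{h\}$ is also $\src$-elementary, since $\src(x,h)=xh$ restricts to a diffeomorphism; hence every weakly grouplike element is $S$-invariant, and $\Gsp(\Dirac(M\rtimes H))$ equals the full spectral sheaf $\Esp(\Dirac(M\rtimes H))=\Esp(\Dirac_{\pi}(M\times H))$ as a set. Combined with the identification of smooth structures noted above, the underlying manifold of $\AGsp(\Dirac(M\rtimes H))$ coincides with that of $\Bsp(\Dirac_{\pi}(M\times H))$. On the manifold of arrows $\Phi^{\text{agr}}_{M\rtimes H}$ is given by the same formula $(x,h)\mapsto\llbracket E_{h},f\rrbracket|_{x}$ as $\Phi^{\text{bun}}_{M\times H}$, which is a diffeomorphism by Theorem \ref{Theorem Isomorphism of trivial bundles}; and $\Phi^{\text{agr}}_{M\rtimes H}$ is the identity on the object manifold $M$. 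A homomorphism of groupoids that restricts to a diffeomorphism on both the arrow and object manifolds is an isomorphism of Lie groupoids, which gives the claim.

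The step I expect to be the crux is the interface between the two constructions: one must verify that the groupoid isomorphism obtained algebraically in the second paragraph is literally the same set map as the diffeomorphism obtained in the third, and that the smooth structures defined through $\Theta_{M\rtimes H}$ and $\Theta_{M\times H}$ genuinely agree rather than merely being comparable. Both points reduce to the identification of $\Dirac(M\rtimes H)$ and $\Dirac_{\pi}(M\times H)$ as coalgebras over $\Cc(M)$, together with the explicit description of $\Phi^{\text{egr}}_{M\rtimes\Hd}$ on bisections of $M\rtimes\Hd$; once these are in place the remaining verifications are routine.
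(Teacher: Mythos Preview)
Your proposal is correct and follows essentially the same two-step strategy as the paper: first obtain the groupoid isomorphism by passing through the \'{e}tale picture $M\rtimes\Hd\cong\Gsp(\Cc(M\rtimes\Hd))\cong\Gsp(\Dirac(M\rtimes H))$, and then invoke Theorem \ref{Theorem Isomorphism of trivial bundles} for the smooth structure. The paper's proof is considerably terser, but your additional remarks (that $\Psi_{M\rtimes H}$ is a Hopf algebroid isomorphism, that $\Gsp=\Esp$ here because every weakly grouplike element is $S$-invariant, and that $\Theta_{M\rtimes H}=\Theta_{M\times H}$) are exactly the verifications one would supply to justify its compressed statements.
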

\begin{proof}
Since $M\rtimes H$ is isomorphic to $M\rtimes\Hd$ and $\AGsp(\Dirac(M\rtimes H))$ is isomorphic to
$\Gsp(\Dirac(M\rtimes H))$, the map $\Phi^{\text{agr}}_{M\rtimes H}$ is an isomorphism of groupoids.
By Theorem \ref{Theorem Isomorphism of trivial bundles} it is also a diffeomorphism, 
which implies that it is an isomorphism of Lie groupoids.
\end{proof}

\begin{ex} \rm
Let $H$ be a Lie group so that $\Dirac(H)$ is a locally convex Hopf algebra over $\CC$.
Example \ref{Example Dirac(N)} shows that $\AGsp(\Dirac(H))=\{\delta_{h}\,|\,h\in H\}$ is
naturally diffeomorphic to $H$. The multiplication and inverse maps on $\AGsp(\Dirac(H))$ are induced 
by the multiplication and the antipode on $\Dirac(H)$. Namely, for any $h,h'\in H$ we have:
\begin{align*}
\delta_{h}\delta_{h'}&=\delta_{h}*\delta_{h'}=\delta_{hh'}, \\
\delta_{h}^{-1}&=S(\delta_{h})=\delta_{h^{-1}}.
\end{align*}
\end{ex}

\noindent
{\bf Acknowledgements:} 

I would like to thank O. Dragičević, F. Forstnerič, A. Kostenko and J. Mrčun for support during research.

\end{document}